\newtheorem{theorem}{Theorem}[section]
\newtheorem{lemma}[theorem]{Lemma}
\newtheorem{proposition}[theorem]{Proposition}
\newtheorem{corollary}[theorem]{Corollary}
\newtheorem{definition}[theorem]{Definition}
\newtheorem{example}[theorem]{Example}
\newtheorem{remark}[theorem]{Remark}
\newcommand{\cpcc}[3]{\mathcal{C}\mathcal{P}\mathcal{C}\mathcal{C}_{\text{loc}}({#1},~ \mathcal{C}_{\mathcal{#2}}^* (\mathcal{#3}))}
\newcommand{\ced}[2]{\mathcal{C}_{\mathcal{#1}}^* (\mathcal{#2})}
\author{Arunkumar  C.S.}
\address{Kerala School of Mathematics, Kozhikode 673 571, India.}
\email{\textcolor[rgb]{0.00,0.00,0.84}{arunkumarcsmaths9@gmail.com}} 
\keywords{locally C*-algebras; local operator systems; local completely positive maps;  local  boundary representations; pure maps}
\subjclass[MSC 2010]{46L05}
\begin{document}

\setcounter{page}{1}

\title{Local boundary representations  of locally C*-algebras}

\maketitle

\begin{abstract}We initiate a study of non-commutative Choquet boundary for spaces of unbounded operators. We define the notion of local boundary representations for local operator systems in locally C$^*$-algebras and prove that  local boundary representations provide an intrinsic invariant for a particular class of local operator systems. An appropriate analog of purity of  local completely positive maps on local operator systems is used to characterize local boundary representations for local operator systems in Frechet locally C$^*$-algebras. 
\end{abstract}

\section{Introduction}
The notion of locally $C^*$-algebras was introduced by Atushi Inoue \cite{Inou71} to study algebras of unbounded operators on  a Hilbert space. In the literature,  locally $C^*$-algebra have been studied by several authors under different names like pro-$C^*$-algebras, $O^*$-algebras, $LCM^*$-algebras, and multinormed $C^*$-algebras. Effros and Webster \cite{Effr96}  initiated a study of the locally convex version of  operator spaces  called  the \textit{local operator spaces}. In 2008, A. Dosiev \cite{Dosi08} realized local operator spaces as  subspaces of the locally $C^*$-algebra  $\ced{E}{D}$ of unbounded operators on a quantized domain $\mathcal{E}$ with its union space $\mathcal{D}$.  Also, Dosiev introduced \textit{local operator systems} as the  unital self adjoint subspaces of $\ced{E}{D}$. Based upon the local positivity concept in locally $C^*$-algebra,  Dosiev \cite{Dosi08} proved Stinespring representation theorem for local completely positive maps and  Arveson extension theorem for local completely positive maps on Frechet local operator systems. Recently, the  \text{minimality} of Stinespring representation was identified   by Bhat, Anindya, and Santhoshkumar \cite{Bhat21}, and  showed that minimal Stinespring's representation is unique up to unitary equivalence.

The extremal theory concerning Choquet boundary of subalgebras of function algebras play an important role in numerous areas of classical analysis. Let $X$ be a locally compact Hausdorff space and $C(X)$ be the algebra of all continuous functions on $X$. Given a uniform algebra $\mathcal{U} \subseteq C(X)$ and a point $x_{0}\in X$. If the evaluation functional corresponds to $x_{0}$ admits a unique completely positive extension from $\mathcal{U}$ to $C(X)$, then  we say that the  point $x_{0}$ is in the \textit{Choquet boundary}\cite{BL75} of $\mathcal{U}$. The non-commutative analog of this notion called the \textit{boundary representations} of linear subspaces  in $C^*$-algebras was introduced  by Arveson \cite{Arv69} and studied extensively by him in  \cite{Arv72,Arv08}. The objects boundary representations are intrinsic invariants for operator systems(and operator algebras), and  provide a context for showing the existence of non-commutative Silov boundary. The articles \cite{DK15,DM05,MH79,MS98} are also worth mentioning in this context. There is  plenty of literature on generalizing the  notion  of boundary representations to different contexts
\cite{Arun21,FHL18,NPSV18}. This article  initiate a study of   non-commutative Choquet boundary in the context of locally $C^*$-algebras and a related extremal notion of purity of local completely positive maps. 

This paper is organized as follows. In  section 2, we recall necessary background material and results that are required throughout. Section 3 deals with certain elementary results on  local completely contractive(local CC) maps and local completely positive(local CP) maps. We obtain  a locally convex version of the  Arveson extension theorem for  unital local CC-maps. That is, a unital local CC-map  from a subspace  $\mathcal{M}$ of  a locally $C^*$-algebra
$\mathcal{A}$ can be extended to a local CP-map on $\mathcal{A}$. In Section 4, a suitable notion of  \textit{irreducible representations} of locally $C^*$-algebras is introduced  using the idea of commutants. We show that an irreducible Stinespring representation of a Frechet locally $C^*$-algebra is minimal.  The concept of \textit{pure local CP-maps} on local operator systems are introduced and proved  that a local CP-map on a locally $C^*$-algebra is a pure local CP-map if and only if its  minimal Stinespring representation is irreducible. In Section 5, we introduce \textit{local boundary representations} of locally $C^*$-algebras and prove that local boundary representations provide an intrinsic invariant for local operator systems. In the case of Frechet locally $C^*$-algebras, we characterize local boundary representations using the notions of  pure local CP-maps and a couple of other new notions. 

\section{Preliminaries}
\subsection{Locally C*-algebras}Let $\mathcal{A}$ be a unital $*$-algebra with unit $1_{\mathcal{A}}$. A seminorm $p$ on $\mathcal{A}$ is said to be sub-multiplicative, if  $p(1_{\mathcal{A}})=1$ and $p(ab)\leq p(a)p(b)$ for every $a,b\in \mathcal{A}$. A sub-multiplicative seminorm   $p$  satisfies the condition  $p(a^*a)=p(a)^2$ for every $a\in \mathcal{A}$, is called a $C^*$-seminorm. Let $(\Lambda,\leq)$ be a directed poset.  A family of seminorms $\mathcal{P}=\{p_{\alpha}:\alpha\in \Lambda\}$ on $\mathcal{A}$ is called an upward filtered family, if $\alpha \leq \beta$ in $\Lambda$, then 
$p_{\alpha}(a)\leq p_{\beta}(a)$ for every $a \in \mathcal{A}$. A \textit{locally  C$^*$-algebra} $\mathcal{A}$ is a $*$-algebra together with an upward filtered  family of $C^*$-seminorms $\mathcal{P}$ on $\mathcal{A}$ such that $\mathcal{A}$ is complete with respect to the locally convex topology generated by the family $\mathcal{P}$. 

Throughout this article,  $\mathcal{A}$ always denote a  locally $C^*$-algebra with a prescribed family of $C^*$seminorms $\{p_{\alpha}:\alpha\in \Lambda\}$. Let $I_{\alpha}=\{a\in \mathcal{A}:p_{\alpha}(a)=0\}$ and $\mathcal{A}_{\alpha}$  be the quotient  $C^*$-algebra $\mathcal{A}/I_{\alpha}$ with the $C^*$-norm induced by $p_{\alpha}$. Denote the cannonical quotient $*$-homomorphism  from $\mathcal{A}$ to $\mathcal{A}_{\alpha} $ by $\pi_{\alpha}$. Note that for $\alpha \leq \beta$ in $\Lambda$, there is a   cannonical $*$-homomorphism  $\pi_{\alpha \beta}:\mathcal{A}_{\beta} \rightarrow \mathcal{A}_{\alpha}$ where $\pi_{\alpha \beta}(a+I_{\beta})=a+I_{\alpha}$ and that  satisfies  $\pi_{\alpha \beta}\pi_{\beta}=\pi_{\alpha}$. Then
one can identify $\mathcal{A}$ as the  the inverse limit of the projective system $\{\mathcal{A}_{\alpha},\pi_{\alpha, \beta}: \alpha, \beta\in \Lambda \}$ of $C^*$-algebras \cite{Phil88}.  

\subsection{Local positve elements}Anar Dosiev \cite{Dosi08} introduced the notions of local  hermitian and local positivity in locally $C^*$-algebras. An element $a\in \mathcal{A}$ is called \textit{local hermitian} if $a=a^*+x$ for some $x\in \mathcal{A}$ such that $p_{\alpha}(x)=0$ for some $\alpha\in \Lambda$ and an element $a\in \mathcal{A}$ is called \textit{local positive} if $a=b^*b+x$ for some $b,x\in \mathcal{A}$ such that $p_{\alpha}(x)=0$ for some $\alpha\in \Lambda$.  In this case, we call $a$ is $\alpha$-hermition (and $\alpha$-positive, respectively). We use  $a\geq_{\alpha}0$ to denote $a$ is $\alpha$-positive. A direct computation shows that $a\geq_{\alpha}0$ in $ \mathcal{A}$ if and only if the $\pi_{\alpha}(a) \geq 0$  in the $C^*$-algebra $\mathcal{A}_{\alpha}$. 
\subsection{Local operator systems and local CP-maps}Let $\mathcal{A}$ be a locally $C^*$-algebra. For  a  linear subspace $S$ of $\mathcal{A}$ denote $S^*=\{x^*:x\in S\}$. We say $S$ is \textit{self adjoint} if $S=S^*$. A \textit{local operator system} in $\mathcal{A}$ is a unital self adjoint linear subspace  of $\mathcal{A}$. An element $a$ in a local operator system $S$ is local positive if $a$ is local positive in $\mathcal{A}$. 
 Consider another locally $C^*$-algebra $\mathcal{B}$  with the associated  family of seminorms  $\{q_{l}:l\in \Omega\}$. Let $S_{1}$ and $S_{2}$ be local operator systems in $\mathcal{A}$ and $\mathcal{B}$ respectively.
 A linear map $\phi:S_{1}\rightarrow S_{2}$ is  said to be  \textit{local positive}, if for each $l\in \Omega$ there corresponds $\alpha\in \Lambda$ such that $\phi(a)\geq_{l}0$ whenever $a\geq_{\alpha} 0$ in $S_{1}$. The map  $\phi$ is said to be \textit{local bounded}, if for each $l\in \Omega$ there exists an $\alpha\in\Lambda$ and $C_{l\alpha}>0$ such that $q_{l}(\phi(a))\leq C_{l\alpha}p_{\alpha}(a)$ for all $a\in S_{1}$. If $C_{l\alpha}$ can be chosen to be 1, then we say that $\phi$ is \textit{local contractive}. For $n\in \mathbb{N}$, let $M_{n}(\mathcal{A})$ denotes the set of all $n\times n$ matrices over $\mathcal{A}$. Naturally $M_{n}(\mathcal{A})$  is  a locally $C^*$-algebra with the    defining family of seminorms $\{p_{\alpha}^n:\alpha\in \Lambda\}$, where  $p_{\alpha}^n([a_{ij}])=\Vert \pi_{\alpha}^{(n)}([a_{ij}]) \Vert_{\alpha} $ for $[a_{ij}]$ in  $M_{n}(\mathcal{A})$. We use $\phi^{(n)}$ to denote the  $n$-amplification of the map $\phi$, that is, $\phi^{(n)}: M_{n}(S_{1})\rightarrow M_{n}(S_{2})$ 
defined by $\phi^{(n)}([a_{ij}])=[\phi(a_{ij})]$ for 
$[a_{ij}]$ in  $M_{n}(S_{1})$. The map $\phi$ is called  \textit{local completely bounded}(local CB-map) if for each $l\in \Omega$, there exists $\alpha\in \Lambda$ and $C_{l\alpha}>0$ such that $q_{l}^{n}([\phi(a_{ij})])\leq C_{l\alpha}p_{l}^{n}([a_{ij}])$, for every $n\in \mathbb{N}$. If $C_{l\alpha}$ can be chosen to be $1$, then we say $\phi$ is  \textit{local completely contractive}(local CC-map). The map $\phi$ is called  \textit{local completely positive}(local CP-map) if for each $l\in \Omega$, there exists $\alpha\in \Lambda$ such that $\phi^{(n)}([a_{ij}])\geq_{l}0$ in $M_{n}(S_{2})$ whenever $[a_{ij}]\geq_{\alpha}0$ in $M_{n}(S_{1})$.

\subsection{Representations of locally C*-algebras} Let $H$ be a complex Hilbert space and $\mathcal{D}$ be a dense subspace of $H$. A \textit{quantized domain} in $H$ is  a triple $\{H,\mathcal{E}, \mathcal{D}\}$, where $\mathcal{E}=\{H_{l}: l\in \Omega\}$ is an upward filtered family of closed subspaces of $H$ such that the union space $\mathcal{D}=\bigcup\limits_{l\in \Omega}H_{l}$ is dense in $H$.  In short, we say  $\mathcal{E}$ is a quantized domain in $H$ with its union space $D$.  
A quantized doamin $\mathcal{E}$ is called a \textit{quantized Frechet domain} if $\mathcal{E}$ is a countable family. 

Corresponding to a quantized domain $\mathcal{E}=\{H_{l}: l\in \Omega\}$ we can associate an upward filtered family  $\mathscr{P}=\{P_{l}: l\in \Omega\}$ of projections in $B(H)$ where $P_{l}$ is the orthogonal projection of $H$ onto the closed subspace $H_{l}$.
\subsection*{The space $\ced{E}{D}$} Let us  denote $L(\mathcal{D})$ by  the set of all linear operators on the linear subspace $\mathcal{D}$. The set  of all \textit{noncommutative continuous functions} on a quantized domain $\mathcal{E}$ is defined as 
$$\mathcal{C}_{\mathcal{D}}(\mathcal{E})=\{T\in L(\mathcal{D}): TP_{l}=P_{l} TP_{l}\in B(H), \text{ for all } l\in \Omega\}.$$ 
Note that  $\mathcal{C}_{\mathcal{D}}(\mathcal{E})$ is an algebra and if $T\in L(\mathcal{D})$, then
$$T\in \mathcal{C}_{\mathcal{D}}(\mathcal{E}) \text{ if and only if } T(H_{l})\subseteq H_{l} \text{ and } T|_{H_{l}}\in B(H_{l}) \text{ for all } l\in \Omega.$$
The  \textit{$*$-algebra of all noncommutative continuous functions on a quantized domain} $\mathcal{E}$  is defined as 
$$\ced{E}{D}=\{T\in \mathcal{C}_{\mathcal{D}}(\mathcal{E}) : P_{l}T\subseteq TP_{l}, \text{ for all } l\in \Omega\}.$$ 
Note that $\ced{E}{D}$ is  a unital subalgebra of  $\mathcal{C}_{\mathcal{D}}(\mathcal{E})$. For more details about the adjoint of operators in $\ced{E}{D}$  refer \cite[Proposition 3.1]{Dosi08}. For $T\in L(\mathcal{D})$, it is easy to see that 
$T\in \ced{E}{D}$  if and only if  for all  $l\in \Omega$
$$T(H_{l})\subseteq H_{l},~  T|_{H_{l}}\in B(H_{l}) \text{ and } T(H_{l}^{\perp}\cap\mathcal{D})\subseteq H_{l}^{\perp}\cap\mathcal{D}.$$
Now, define $q_{l}: \ced{E}{D}\rightarrow \mathbb{R}$ by $q_{l}(T)=\Vert T|_{H_{l}} \Vert$ for all $T\in \ced{E}{D}$. Then $\mathcal{Q}=\{q_{l}:l\in \Omega \}$ is an upward filtered family of $C^*$-seminorms on $\ced{E}{D}$.  Also, $\ced{E}{D}$ is complete with respect to the locally convex topology generated by the family  $\mathcal{Q}$. Hence $\ced{E}{D}$ is a  locally $C^*$-algebra.   

We use $\cpcc{S}{E}{D}$ to denotes the class of all local completely positive  and local completely contractive maps from a local operator system $S$ to $\mathcal{C}_{\mathcal{E}}^* (\mathcal{D})$. 

\subsection*{Stinespring's theorem for local CP-maps} A locally convex version (or an unbounded version)  of the celebrated  Stinespring's dilation theorem is appeared in the work of A.Dosiev \cite[Theorem 5.1]{Dosi08}. 
\begin{theorem}\cite[Theorem 5.1]{Dosi08}\label{Stinespring's theorem}
Let $\phi\in\cpcc{\mathcal{A}}{E}{D}$. Then there exists a Hilbert space $H^{\phi}$ and a quantized domain $\mathcal{E}^{\phi}=\{H_{\alpha}^{\phi}:\alpha\in \Lambda\}$ in $H^{\phi}$   with its union space $\mathcal{D}^{\phi}$,  a contraction $V_{\phi}:H\rightarrow H^{\phi}$, and a unital local contractive $*$-homomorphism $\pi_{\phi}:\mathcal{A}\rightarrow C^*_{\mathcal{E}^{\phi}}(\mathcal{D}^{\phi})$ such that 
$$ \phi(a)\subseteq V_{\phi}^*\pi_{\phi}(a)V_{\phi} \text{ and } V_{\phi}(H_{\alpha})\subseteq H_{\alpha}^{\phi}$$
for every $a\in\mathcal{A}$ and $l\in \Lambda$.
Moreover, if $\phi(1_{\mathcal{A}})=1_{\mathcal{D}}$, then $V_{\phi}$ is an isometry.
\end{theorem}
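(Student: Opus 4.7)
The plan is to adapt the classical Stinespring GNS-type construction to the locally $C^*$-algebra setting, with unboundedness controlled through the quantized domain structure. First I would form the algebraic tensor product $V := \mathcal{A} \otimes_{\text{alg}} \mathcal{D}$ and equip it with the sesquilinear form
$$\langle a \otimes \xi,\, b \otimes \eta \rangle_\phi := \langle \phi(b^*a)\xi, \eta\rangle_H,$$
which makes sense because $\phi(b^*a)\in \ced{E}{D}$ acts on $\mathcal{D}$. To verify positive semi-definiteness on $V$, I would fix $\sum_{i=1}^n a_i \otimes \xi_i$, use upward filteredness of $\mathcal{E}$ to place all $\xi_i$ in a common $H_l$, note that $[a_j^* a_i]$ is trivially positive in $M_n(\mathcal{A})$ (being of the form $A^*A$), and invoke local complete positivity of $\phi$ to pick $\alpha \in \Lambda$ with $\phi^{(n)}([a_j^*a_i]) \geq_l 0$. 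Local positivity at level $l$ translates into pointwise nonnegativity on $H_l^n$, delivering $\sum_{i,j}\langle \phi(a_j^*a_i)\xi_i,\xi_j\rangle \geq 0$. Quotienting by the null-space $N_\phi$ and completing then yields the dilation Hilbert space $H^\phi$.

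Next I would construct the quantized domain and the dilation data. For each $\alpha \in \Lambda$ I would define $H_\alpha^\phi$ as the closure in $H^\phi$ of the image of $\mathcal{A} \otimes H_l$, with $l \in \Omega$ the index supplied by the local CP correspondence attached to $\alpha$; upward filteredness of $\{H_\alpha^\phi\}$ is inherited from that of $\{p_\alpha\}$ and $\mathcal{E}$, and density of $\mathcal{D}^\phi := \bigcup_\alpha H_\alpha^\phi$ in $H^\phi$ is immediate. Set
$$\pi_\phi(a)\,(b \otimes \xi + N_\phi) := ab \otimes \xi + N_\phi, \qquad V_\phi\,\xi := 1_\mathcal{A} \otimes \xi + N_\phi,$$
extending $V_\phi$ to $H$ by local contractivity via $\|V_\phi \xi\|^2 = \langle \phi(1_\mathcal{A})\xi, \xi\rangle \leq \|\xi\|^2$; the isometry claim when $\phi(1_\mathcal{A}) = 1_\mathcal{D}$ then drops out of the same identity.

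The algebraic verifications that $\pi_\phi$ is a unital $*$-homomorphism, that $V_\phi(H_l) \subseteq H_{\alpha(l)}^\phi$, and the dilation identity $\phi(a) \subseteq V_\phi^* \pi_\phi(a) V_\phi$ are routine from the definitions. The main obstacle, and where the locally $C^*$-algebra structure really bites, is proving $\pi_\phi(a) \in \ced{E^\phi}{D^\phi}$: one must show that $\pi_\phi(a)$ leaves each $H_\alpha^\phi$ invariant, that its restriction there is a bounded operator with $\|\pi_\phi(a)|_{H_\alpha^\phi}\| \leq p_\alpha(a)$ (giving local contractivity of $\pi_\phi$), and that $P_\alpha^\phi \pi_\phi(a) \subseteq \pi_\phi(a) P_\alpha^\phi$. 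The norm bound is the technical core: it would be obtained from the matrix inequality $[b_j^* a^* a\, b_i] \leq p_\alpha(a)^2\, [b_j^* b_i]$ valid modulo $I_\alpha$, pushed through the form by local complete positivity of $\phi$; invariance and commutation with the spectral projections then follow by passage from the defining dense subspace to all of $H^\phi$.
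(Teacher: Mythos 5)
The paper offers no proof of this theorem: it is stated verbatim as a quoted result, \cite[Theorem 5.1]{Dosi08}, so there is no in-paper argument to compare against. Your outline is the standard Stinespring--GNS construction that Dosiev's proof of the cited theorem follows --- positivity of the form on $\mathcal{A}\otimes_{\mathrm{alg}}\mathcal{D}$ via local complete positivity applied to matrices $[a_j^*a_i]$ with all vectors placed in a common $H_l$, the seminorm bound $\Vert \pi_\phi(a)|_{H^\phi_\alpha}\Vert\le p_\alpha(a)$ extracted from the $\alpha$-positivity of $p_\alpha(a)^2[b_j^*b_i]-[b_j^*a^*ab_i]$, and the invariance/commutation checks for the projections of the dilated quantized domain --- and it is essentially correct as a proof sketch.
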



 Any triple $(\pi_{\phi},V_{\phi},\{H^{\phi};\mathcal{E}^{\phi};\mathcal{D}^{\phi}\})$  that satisfies the conditions of the Theorem \ref{Stinespring's theorem} is called  a \textit{Stinespring representation} for $\phi$. 
\subsection*{Minimality of Stinespring representation for local CP-maps} 
The \textit{minimality} of the Stinespring representation was introduced and studied recently by Bhat and et al in \cite{Bhat21}. 
A Stinespring representation $(\pi_{\phi},V_{\phi},\{H^{\phi}; \mathcal{E}^{\phi} ;\mathcal{D}^{\phi}\})$ of $\phi$ is said to be \textit{minimal}, if $H_{l}^{\phi}=[\pi_{\phi}V_{\phi}H_{l}]$, for every $l\in \Lambda$. They proved that given any Stinespring represesentation of a map $\phi\in\cpcc{\mathcal{A}}{E}{D}$, one can reduce it to  minimal Stinespring representation, and also any two minimal Stinespring representations are unitarily equivalent in the following sense. 
Let $\pi_{1}$ and $\pi_{2}$ be two representations of  the locally $C^*$-algebra $\mathcal{A}$ on the quanitzed doamins $\{H;\mathcal{E}=\{H_{l}:l\in \Omega\};\mathcal{D}\}$ and $\{H';\mathcal{E}'=\{H'_{l}:l\in \Omega \};\mathcal{D}'\}$, respectively. We say $\pi_{1}$ and $\pi_{1}$ are unitarily equivalent if there exists a unitary $U:H'\rightarrow H$  such that $U(H'_{l})\subseteq H_{l}$  and  $\pi_{2}(a)=U^*\pi_{1}(a)U|_{\mathcal{D}'}$ for all $a\in\mathcal{A}$ and  all $l\in \Omega$.

\section{ Local positive linear maps}
In this section, we prove an analog of the Arveson extension theorem for local CC-maps on linear subspaces of $\ced{E}{D}$ for a quantized Frechet domain $\mathcal{E}$. This result is crucial in establishing a theorem in the main section. Now, let $S$ be a local operator system in the locally $C^*$-algebra $\mathcal{A}$. A linear functional $f:S\rightarrow\mathbb{C}$ is an $\alpha$-contractive linear functional if $\vert f(a)\vert\leq p_{\alpha}(a)$ for all $a\in S$. Note that, by Hahn-Banach extension theorem, there is an $\alpha$-contractive linear map $\tilde{f}:\mathcal{A}\rightarrow\mathbb{C}$  such that $\tilde{f}|_{S}=f$ and $\vert \tilde{f}(a)\vert\leq p_{\alpha}(a)$ for all $a\in \mathcal{A}$.  For $a\in\mathcal{A}$ we define  the $\alpha$-\textit{spectrum} of $a$  to be the spectrum of $\pi_{\alpha}(a)$ in the $C^*$-algebra $\mathcal{A}_{\alpha}$. We use $\sigma_{\alpha}(a)$ to denote the $\alpha$-spectrum of $a$. 

\begin{lemma}\label{lc implies lp for linear functional}
Let $S$ be a local operator system in a locally $C^*$-algebra $\mathcal{A}$ and let $f:S\rightarrow \mathbb{C}$ be a unital $\alpha$-contractive linear functional. Let $\tilde{f}$ be a Hahn-Banach extension of $f$ to $\mathcal{A}$. If $a=x^*x+b\in S$ is an  $\alpha$-positive element of $\mathcal{A}$, then $0\leq \tilde{f}(x^*x)\leq r_{\alpha}$,  where $r_{\alpha}$ is the spectral radius of  $\pi_{\alpha}(a)$.
\end{lemma}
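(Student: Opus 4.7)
The strategy is to push the problem down to the quotient $C^*$-algebra $\mathcal{A}_\alpha$, where we can invoke classical facts about unital contractive functionals on $C^*$-algebras. The $\alpha$-contractivity of $\tilde{f}$ is precisely what allows this reduction.

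First I would observe that since $|\tilde{f}(c)| \leq p_\alpha(c)$ for all $c \in \mathcal{A}$, the functional $\tilde{f}$ annihilates the closed two-sided ideal $I_\alpha = \{c \in \mathcal{A} : p_\alpha(c) = 0\}$. Therefore $\tilde{f}$ descends to a well-defined linear functional $\hat{f} : \mathcal{A}_\alpha \to \mathbb{C}$ satisfying $\hat{f} \circ \pi_\alpha = \tilde{f}$, and $|\hat{f}(\pi_\alpha(c))| \leq \|\pi_\alpha(c)\|_\alpha$, so $\|\hat{f}\| \leq 1$. Moreover, $\hat{f}(1_{\mathcal{A}_\alpha}) = \hat{f}(\pi_\alpha(1_\mathcal{A})) = \tilde{f}(1_\mathcal{A}) = f(1_\mathcal{A}) = 1$, so $\hat{f}$ is a unital contractive (hence norm-attaining at the unit) linear functional on the $C^*$-algebra $\mathcal{A}_\alpha$. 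By the standard characterization of states, $\hat{f}$ is a state on $\mathcal{A}_\alpha$; in particular $\hat{f}$ is positive.

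Next I would exploit the decomposition $a = x^*x + b$ with $p_\alpha(b) = 0$. Since $b \in I_\alpha$, we have $\tilde{f}(b) = 0$, whence $\tilde{f}(x^*x) = \tilde{f}(a)$ and equivalently $\pi_\alpha(a) = \pi_\alpha(x)^*\pi_\alpha(x)$ is a positive element of $\mathcal{A}_\alpha$ whose spectral radius equals $r_\alpha = \|\pi_\alpha(a)\|_\alpha$. The state $\hat{f}$ applied to this positive element gives
\[
0 \;\leq\; \hat{f}(\pi_\alpha(a)) \;\leq\; \|\pi_\alpha(a)\|_\alpha \;=\; r_\alpha,
\]
and since $\hat{f}(\pi_\alpha(a)) = \tilde{f}(a) = \tilde{f}(x^*x)$, the desired bound $0 \leq \tilde{f}(x^*x) \leq r_\alpha$ follows.

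The only subtle point is verifying that $\hat{f}$ is genuinely positive; this is the well-known fact that a unital linear functional of norm at most one on a unital $C^*$-algebra is a state, so no new work is needed there. Everything else is bookkeeping: the $\alpha$-contractivity hypothesis is tailor-made to make $\tilde{f}$ factor through $\pi_\alpha$, and the local positivity hypothesis on $a$ is tailor-made to make $\pi_\alpha(a)$ positive in $\mathcal{A}_\alpha$. I do not anticipate any real obstacle beyond ensuring these translations are set up cleanly.
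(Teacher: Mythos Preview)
Your argument is correct, but the route differs from the paper's. The paper argues directly by contradiction: assuming $\tilde{f}(x^*x)\notin[0,r_\alpha]$, it separates this point from the interval $[0,r_\alpha]$ by a closed disk $D_r(\mu)$, observes that $\sigma_\alpha(x^*x)\subseteq[0,r_\alpha]\subseteq D_r(\mu)$ so that $\|\pi_\alpha(x^*x-\mu 1)\|_\alpha\leq r$ by normality, and then derives the contradiction $|\tilde f(x^*x)-\mu|\leq p_\alpha(x^*x-\mu 1)\leq r$ from unital $\alpha$-contractivity. In other words, the paper reproves \emph{in line} the classical fact that a unital contractive functional on a $C^*$-algebra is positive, applied to the single element $\pi_\alpha(x^*x)$. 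You instead factor $\tilde{f}$ through the quotient $\mathcal{A}_\alpha$, note that the induced $\hat{f}$ is a unital contractive functional on a $C^*$-algebra, and cite that same classical fact as a black box to conclude $\hat{f}$ is a state; the estimate then drops out immediately. Your approach is shorter and more structural once the state characterization is granted, while the paper's is more self-contained; mathematically they are the same mechanism viewed from two angles.
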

\begin{proof} Assume that  $\tilde{f}(x^*x)\notin [0, r_{\alpha}]$. Since a closed interval in the real line is the intersection of all closed disks containing it in the complex plane, there exists a closed disk $D_{r}(\mu)$ centered at $\mu\in \mathbb{C}$ and radius $r$ such that $|\tilde{f}(x^*x)-\mu|>r$ and $[0,r_{\alpha}]\subseteq D_{r}(\mu)$. Then $\sigma_{\alpha}(x^*x-\mu 1)\subseteq D_{r}(0)$ as $\sigma_{\alpha}(x^*x)\subseteq [0, r_{\alpha}] \subseteq D_{r}(\mu)$. Since $\pi_{\alpha}(x^*x)$ is a positive element of $\mathcal{A}_{\alpha}$, $\pi_{\alpha}(x^*x-\mu 1)$  is a normal element of $\mathcal{A}_{\alpha}$. The spectral radius and norm are same for normal elements of a $C^*$-algebra gives us $\Vert \pi_{\alpha}( x^*x-\mu 1  )\Vert _{\alpha} \leq r$. Now using the fact $\tilde{f}$ is a unital $\alpha$-contraction, we have
\begin{align*}
 |\tilde{f}(x^*x)-\mu|&=|\tilde{f}(x^*x-\mu 1)|\leq  p_{\alpha}(x^*x-\mu 1) \\
  &= \Vert \pi_{\alpha}(x^*x-\mu 1)\Vert_{\alpha} \leq r.
\end{align*}
This is a contradiction. Hence $\tilde{f}(x^*x)\in [0,r_{\alpha}]$.
\end{proof}

\begin{theorem}\label{lc implies lp}
Let $S$ be a local operator system in a locally $C^*$-algebra $\mathcal{A}$ and $\mathcal{E}$ be a quantized domain with its union space $\mathcal{D}$. Let $\phi: S\rightarrow \ced{E}{D}$ be a unital  local contractive map. Then  $\phi$ is a local positive map.
\end{theorem}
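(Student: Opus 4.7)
The plan is to reduce the positivity question, one seminorm index $l$ at a time, to the scalar statement already established in Lemma \ref{lc implies lp for linear functional}. Fix $l \in \Omega$. By the local contractivity hypothesis there is some $\alpha \in \Lambda$ with $q_{l}(\phi(s)) \leq p_{\alpha}(s)$ for every $s \in S$; I claim this $\alpha$ witnesses local positivity at the index $l$. Take any $a \in S$ with $a \geq_{\alpha} 0$, and write $a = x^{*}x + b$ with $p_{\alpha}(b)=0$. What must be shown is that $\phi(a) \geq_{l} 0$ in $\ced{E}{D}$, which by the description of the quotient $C^{*}$-algebra at level $l$ is equivalent to $\phi(a)|_{H_{l}}$ being a positive operator in $B(H_{l})$.

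To test this positivity, I would fix a unit vector $\xi \in H_{l} \subseteq \mathcal{D}$ and form the vector functional
\[
 f_{\xi}(s) = \langle \phi(s)\xi,\xi\rangle, \qquad s \in S.
\]
This is well defined because $\phi(s) \in \ced{E}{D}$ leaves $H_{l}$ invariant and restricts to a bounded operator there. It is unital since $\phi(1_{\mathcal{A}}) = 1_{\mathcal{D}}$, and it is $\alpha$-contractive because
\[
 |f_{\xi}(s)| \leq \|\phi(s)|_{H_{l}}\| = q_{l}(\phi(s)) \leq p_{\alpha}(s).
\]
Now extend $f_{\xi}$ by Hahn--Banach to an $\alpha$-contractive linear functional $\widetilde{f_{\xi}}$ on all of $\mathcal{A}$.

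Lemma \ref{lc implies lp for linear functional} applies directly to $\widetilde{f_{\xi}}$ and the decomposition $a = x^{*}x + b$, giving $\widetilde{f_{\xi}}(x^{*}x) \in [0, r_{\alpha}]$, in particular $\widetilde{f_{\xi}}(x^{*}x) \geq 0$. Since $|\widetilde{f_{\xi}}(b)| \leq p_{\alpha}(b) = 0$, we get $\widetilde{f_{\xi}}(b)=0$, and therefore
\[
 \langle \phi(a)\xi,\xi\rangle = f_{\xi}(a) = \widetilde{f_{\xi}}(x^{*}x) + \widetilde{f_{\xi}}(b) \geq 0.
\]
As $\xi$ ranges over unit vectors of $H_{l}$ this gives $\phi(a)|_{H_{l}} \geq 0$, hence $\phi(a) \geq_{l} 0$, completing the argument.

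There is no serious obstacle; the step that most deserves care is verifying that the $\alpha$ produced by local contractivity for the given $l$ is simultaneously the one that witnesses local positivity at $l$, and that the additive remainder $b$ with $p_{\alpha}(b)=0$ is correctly absorbed (both by the Hahn--Banach extension, which is $\alpha$-contractive, and by Lemma \ref{lc implies lp for linear functional}, whose conclusion concerns $\widetilde{f}(x^{*}x)$ rather than $\widetilde{f}(a)$). Once that bookkeeping is in place the proof is just assembling the pieces.
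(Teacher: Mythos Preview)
Your proof is correct and follows essentially the same approach as the paper: fix $l$, pick the $\alpha$ furnished by local contractivity, test positivity of $\phi(a)|_{H_{l}}$ via unit-vector functionals $f_{\xi}$, extend each by Hahn--Banach to an $\alpha$-contractive functional on $\mathcal{A}$, and invoke Lemma~\ref{lc implies lp for linear functional} together with $p_{\alpha}(b)=0$ to conclude. The only cosmetic differences are notational.
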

\begin{proof} Fix $l\in \Omega$. Since $\phi$ is local contractive, there exists $\alpha\in \Lambda$ such that $\Vert \phi(a)\Vert_{l} \leq p_{\alpha}(a)$ for every $a\in S$. Let  $a\in S$ and $a=x^*x+b$ where $x,b\in \mathcal{A}$ and $p_{\alpha}(b)=0$ for some $\alpha\in \Lambda$. 
 
 We will show that $\phi(a)|_{H_{l}}$ is a positive operator on  $H_{l}$. 
 Let $h\in H_{l}$ with $\Vert h \Vert =1$.  Define $f_{h}:S\rightarrow \mathbb{C}$ by $f_{h}(y)=\langle \phi(y)|_{H_{l}}h,h \rangle$. Then $f_{h}(1)=1$ and $$|f_{h}(y)|\leq \Vert \phi(y)\Vert_{l}\leq p_{\alpha}(y).$$
 Therefore, the linear functional $f_{h}$ is a unital  $\alpha$-contraction.  Let $\tilde{f}_{h}:\mathcal{A}\rightarrow \mathbb{C}$  be an $\alpha$-contractive Hahn-Banach extension of $f_{h}$. Then 
 $$ \langle \phi(a)|_{H_{l}}h,h\rangle =f_{h}(a) =\tilde{f}_{h}(a) =\tilde{f}_{h}(x^*x)+\tilde{f}_{h}(b).
 $$
Note that, $\tilde{f}_{h}(b)=0$ as $\tilde{f}_{h}$ is an $\alpha$-contraction and  $p_{\alpha}(b)=0$. Using 
Lemma \ref{lc implies lp for linear functional} we conclude that 
$\tilde{f}_{h}(x^*x)=\langle \phi(a)|_{H_{l}}h,h\rangle $ is positive. Therefore, $\phi(a)$ is local positive and that completes the proof. 
\end{proof}  

\begin{remark}
We can use the above theorem to establish the following result, which is a  special case of a result in \cite{Dosi08}.
\end{remark}
 
\begin{theorem}\label{local CC iff local CP}\cite[Corollary 4.1]{Dosi08}
Let $S$ be a local operator system in a locally $C^*$-algebra $\mathcal{A}$ and $\mathcal{E}$ be a quantized domain with its union space $\mathcal{D}$. Let $\phi: S\rightarrow \ced{E}{D}$ be a unital linear map. Then $\phi$ is a local CC-map if and only if   $\phi$ is  a local CP-map.
\end{theorem}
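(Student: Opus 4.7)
The plan is to establish the two implications separately, with Theorem \ref{lc implies lp} handling the ``CC$\Rightarrow$CP'' direction after one passes to amplifications, and a standard $2\times 2$ matrix positivity trick (localized to the seminorms $p_\alpha^n$ and $q_l^n$) handling the ``CP$\Rightarrow$CC'' direction.

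For the forward direction, I would first note that $M_n(\mathcal{C}^*_{\mathcal{E}}(\mathcal{D}))$ is itself a locally $C^*$-algebra of noncommutative continuous functions on the amplified quantized domain $\mathcal{E}^{(n)} = \{H_l^{\oplus n} : l \in \Omega\}$ in $H^{\oplus n}$, with defining seminorms $q_l^n$. Likewise $M_n(S)$ is a local operator system in $M_n(\mathcal{A})$ with seminorms $p_\alpha^n$. If $\phi$ is unital and local CC, then each amplification $\phi^{(n)}: M_n(S) \to M_n(\mathcal{C}^*_{\mathcal{E}}(\mathcal{D}))$ is unital and local contractive (in fact with the same pairing of indices witnessing CC). Applying Theorem \ref{lc implies lp} to $\phi^{(n)}$ gives that $\phi^{(n)}$ is local positive; since this holds for every $n$, $\phi$ is local CP.

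For the reverse direction, suppose $\phi$ is local CP and fix $l \in \Omega$; choose $\alpha \in \Lambda$ such that $\phi^{(n)}$ preserves $\alpha$-positivity into $l$-positivity for every $n$ (one can upgrade to a single $\alpha$ for all $n$ because a local CP map determines a genuine CP map $\pi_l \circ \phi : S/(S \cap I_\alpha) \to \mathcal{C}^*_{\mathcal{E}}(\mathcal{D})/I_l$ between the quotient operator system and $C^*$-algebra, and these are completely positive in the usual sense). For $a \in M_n(S)$ with $p_\alpha^n(a) \leq 1$, the matrix
\[
A \;=\; \begin{pmatrix} 1_{M_n(\mathcal{A})} & a \\ a^* & 1_{M_n(\mathcal{A})} \end{pmatrix}
\]
is $\alpha$-positive in $M_{2n}(\mathcal{A})$ (its image under $\pi_\alpha^{(2n)}$ is positive in $M_{2n}(\mathcal{A}_\alpha)$ because $\|\pi_\alpha^{(n)}(a)\| \leq 1$). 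Applying $\phi^{(2n)}$ and using unitality yields that
\[
\begin{pmatrix} 1_{\mathcal{C}^*_{\mathcal{E}}(\mathcal{D})} & \phi^{(n)}(a) \\ \phi^{(n)}(a)^* & 1_{\mathcal{C}^*_{\mathcal{E}}(\mathcal{D})} \end{pmatrix}
\]
is $l$-positive, and restricting to $H_l^{\oplus n}$ gives $q_l^n(\phi^{(n)}(a)) \leq 1$. Homogeneity then gives $q_l^n(\phi^{(n)}(b)) \leq p_\alpha^n(b)$ for all $b \in M_n(S)$, proving $\phi$ is local CC.

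The main subtlety, and the step I would be most careful about, is the uniform choice of $\alpha$ in the CP$\Rightarrow$CC half: the definition of local CP only gives, for each $l$, some $\alpha_n$ for each amplification $\phi^{(n)}$, and one must argue that a single $\alpha$ works for all $n$. This is where I would invoke the quotient picture $\mathcal{A} = \varprojlim \mathcal{A}_\alpha$ and $\mathcal{C}^*_{\mathcal{E}}(\mathcal{D})/I_l$, noting that a local CP map $\phi$ induces for each $l$ a well-defined CP map at the $C^*$-algebra level against some single $\alpha$, after which classical complete positivity handles every $n$ at once. The positivity argument for the $2\times 2$ matrix and the final norm estimate are then routine transcriptions of the classical unital-CP-is-completely-contractive proof.
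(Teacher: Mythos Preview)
Your proposal is correct and follows essentially the same approach as the paper: Theorem \ref{lc implies lp} applied to amplifications for CC$\Rightarrow$CP, and the standard $2\times 2$ block-matrix trick, localized to $H_l^{\oplus n}$, for CP$\Rightarrow$CC. The one unnecessary detour is your worry about upgrading to a single $\alpha$ valid for all $n$: in the paper's definition of local CP (and local CC) the quantifier order is already $\forall\,l\ \exists\,\alpha\ \forall\,n$, so the uniform $\alpha$ is given to you and the quotient-picture argument is not needed.
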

\begin{proof}
Let $\phi$ be a local CC-map. Fix $l\in \Omega$.  There exists a $\alpha \in \Lambda$  such that $\Vert \phi^{(n)}([a_{ij}])\Vert_{l}\leq p^{(n)}_{\alpha}([a_{ij}])$ for all $[a_{ij}]\in M_{n}(S)$,$n\in\mathbb{N}$.  From the proof of  Theorem \ref{lc implies lp} we have  $\phi^{(n)}([a_{ij}])\geq_{l}0$ whenver $[a_{ij}]\geq_{\alpha}0$.   Thus $\phi$ is a local CP-map.

Conversely, assume that $\phi$ is a local CP-map. 
Fix $l\in \Omega$.  There exists a $\alpha \in \Lambda$  such that $ \phi^{(n)}(A)\geq_{l}0$ whenever $A\geq_{\alpha}0$ in $A\in M_{n}(S)$ and $n\in\mathbb{N}$. 
Let  $A\in M_{n}(S)$ such that $p^{(n)}_{\alpha}(A)\leq 1$. Then 
$$\begin{bmatrix} 1_{n} & A \\
A^* & 1_{n}
\end{bmatrix} \geq_{\alpha} 0\text{ in } M_{2n}(S) $$
Applying the map $\phi^{(2n)}$, we   have $$\begin{bmatrix} I_{n} & \phi^{(n)}(A) \\
\phi^{(n)}(A^*) & I_{n}
\end{bmatrix} \geq_{l} 0 \text{ in }M_{2n}(\ced{E}{D}).$$
Thus $$\begin{bmatrix} I_{n} & \phi^{(n)}(A) \\
\phi^{(n)}(A^*) & I_{n}
\end{bmatrix}_{\bigg|_{H^{n}_{l}\oplus H^{n}_{l}}} \geq  0 \text{ in } B(H^{n}_{l}\oplus H^{n}_{l}).$$
Equivalently $\Vert \phi^{n}(A)|_{H^{n}_{l}}\Vert\leq 1$.  Hence  $\Vert \phi^{n}(A)\Vert_{l}\leq p^{(n)}_{\alpha}(A)$ for every $A\in M_{n}(S)$.
That is, $\phi$ is a local CC-map.
\end{proof}

\begin{theorem}\label{extension to operator system}
Let $\mathcal{A}$   be  unital a locally $C^*$-algebra, and $M$ be a unital subspace of  $\mathcal{A}$. If $\phi : M\rightarrow \ced{E}{D}$ be a unital local contraction, then there is a  local positive extension $\tilde{\phi}$ of $\phi$ to $M+M^*$ given by  
$\tilde{\phi}(x+y^*)=\phi(x)+\phi(y)^*.$
Moreover, $\tilde{\phi}$ is the only  local positive extension of $\phi$ to $M+M^*$. 
\end{theorem}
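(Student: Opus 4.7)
The plan is to reduce the theorem to the classical Paulsen extension result for unital subspaces of $C^*$-algebras (namely: if $N$ is a unital subspace of a $C^*$-algebra $B$ and $\psi : N \to B(H)$ is a unital contraction, then $\tilde{\psi}(x+y^*) = \psi(x) + \psi(y)^*$ is the unique positive extension of $\psi$ to $N + N^*$), by factoring $\phi$ through the quotient $C^*$-algebras $\mathcal{A}_\alpha$ associated to the defining seminorms. The locally convex apparatus then reassembles the local pieces into a single map on $M + M^*$.

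For existence and well-definedness, fix $l \in \Omega$ and use local contractivity to obtain $\alpha = \alpha(l) \in \Lambda$ with $\Vert \phi(a) \Vert_l \leq p_\alpha(a)$ for every $a \in M$. Since $p_\alpha(a) = 0$ then forces $\phi(a)|_{H_l} = 0$, the assignment $\psi_l(\pi_\alpha(a)) := \phi(a)|_{H_l}$ defines a unital contraction $\psi_l : \pi_\alpha(M) \to B(H_l)$ on a unital subspace of $\mathcal{A}_\alpha$. The classical Paulsen theorem applied to $\psi_l$ gives a well-defined positive extension $\tilde{\psi}_l$ to $\pi_\alpha(M) + \pi_\alpha(M)^*$. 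If $x + y^* = x' + y'^*$ in $M + M^*$, then applying $\pi_\alpha$ and using well-definedness of $\tilde{\psi}_l$ yields $(\phi(x) + \phi(y)^*)|_{H_l} = (\phi(x') + \phi(y')^*)|_{H_l}$; since this holds for every $l$ and $\mathcal{D} = \bigcup_l H_l$, the formula $\tilde{\phi}(x+y^*) := \phi(x) + \phi(y)^*$ is unambiguous as an element of $\ced{E}{D}$. The same factorization delivers local positivity: for the $\alpha$ chosen above, if $a \geq_\alpha 0$ in $M + M^*$ then $\pi_\alpha(a) \geq 0$ in $\mathcal{A}_\alpha$, so $\tilde{\phi}(a)|_{H_l} = \tilde{\psi}_l(\pi_\alpha(a)) \geq 0$, i.e., $\tilde{\phi}(a) \geq_l 0$.

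For uniqueness, let $\psi : M + M^* \to \ced{E}{D}$ be any local positive extension of $\phi$. The crucial step is to show that $\psi$ is $*$-preserving. Given hermitian $h \in M + M^*$ and $l \in \Omega$, pick $\alpha$ witnessing local positivity; since $\pi_\alpha(h)$ is hermitian with $\Vert \pi_\alpha(h) \Vert_\alpha = p_\alpha(h)$, the elements $p_\alpha(h)\cdot 1_{\mathcal{A}} \pm h$ are $\alpha$-positive, so $p_\alpha(h)\cdot 1 \pm \psi(h) \geq_l 0$, forcing $\psi(h)|_{H_l}$ to be hermitian. Letting $l$ range over $\Omega$ gives $\psi(h) = \psi(h)^*$. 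For arbitrary $y \in M$, decompose $y = h + ik$ with $h = (y + y^*)/2$ and $k = (y - y^*)/(2i)$ both hermitian in $M + M^*$. Then
$$\psi(y^*) = \psi(h) - i\psi(k) = \psi(h)^* - i\psi(k)^* = \bigl(\psi(h) + i\psi(k)\bigr)^* = \psi(y)^* = \phi(y)^*,$$
so $\psi(x + y^*) = \phi(x) + \phi(y)^* = \tilde{\phi}(x + y^*)$, as required.

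The main obstacle is bookkeeping rather than deep mathematics: one must verify cleanly that the hypotheses of the classical Paulsen theorem are satisfied at each index ($\pi_\alpha(M)$ must be a unital subspace of $\mathcal{A}_\alpha$ and $\psi_l$ must be a genuine unital contraction), and that ``pasting together'' the conclusions across all $l \in \Omega$ produces a single coherent element in $\ced{E}{D}$ — the latter ultimately resting on Hausdorffness of the locally $C^*$-algebra topology, which follows from density of $\mathcal{D}$ in $H$.
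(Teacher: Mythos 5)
Your proof is correct, but it follows a genuinely different route from the one in the paper. You push everything down to the classical Paulsen extension theorem for unital subspaces of $C^*$-algebras by factoring $\phi$ through the quotients $\pi_{\alpha}:\mathcal{A}\rightarrow\mathcal{A}_{\alpha}$ and the restrictions $T\mapsto T|_{H_{l}}$, and then reassemble over $l\in\Omega$; the one step worth spelling out is that $(T|_{H_{l}})^{*}=T^{*}|_{H_{l}}$ for $T\in\ced{E}{D}$ (because $T$ commutes with $P_{l}$), which is what identifies $\tilde{\psi}_{l}(\pi_{\alpha}(x)+\pi_{\alpha}(y)^{*})$ with $(\phi(x)+\phi(y)^{*})|_{H_{l}}$ and makes both the well-definedness and the positivity transfer work. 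The paper instead stays entirely inside the locally convex framework: well-definedness is obtained by showing $\phi$ is self-adjoint on the local operator system $M_{*}=M\cap M^{*}$ (combining Theorem \ref{lc implies lp} with Dosiev's Lemma 4.3), and local positivity is proved at each $l$ by a Hahn--Banach/vector-state argument resting on Lemma \ref{lc implies lp for linear functional}. For uniqueness both proofs reduce to the same fact---a local positive map on $M+M^{*}$ is $*$-preserving---but where the paper simply cites Dosiev's Lemma 4.3, you reprove it via the $p_{\alpha}(h)1\pm h$ trick, which makes your argument more self-contained. Your reduction buys economy, since one classical theorem does the work at every index, at the price of the bookkeeping you acknowledge; the paper's route reuses machinery it needs anyway for Theorem \ref{local CC iff local CP}. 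A minor remark: coherence of the pasted-together operator does not require density of $\mathcal{D}$ in $H$---two elements of $\ced{E}{D}$ that agree on every $H_{l}$ already agree on $\mathcal{D}=\bigcup_{l}H_{l}$ by definition.
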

\begin{proof}
First, we will show that the map $\tilde{\phi}$ is  well-defined. Let $$M_{*}=\{a\in M: a^*\in M\}.$$ Clearly, $M_{*}$ is a local operator system in  $\mathcal{A}$. Also, the map $\phi$ is  a unital local contractive map on $M_{*}$. Using Theorem \ref{lc implies lp} we have $\phi$ is a local positive map. Then $\phi$ is self adjoint on $M_{*}$, thanks to 
\cite[Lemma 4.3]{Dosi08}. To see $\tilde{\phi}$ is well defined,  consider $a_{1},a_{2},b_{1},b_{2}\in M$ with $a_{1}+b_{1}^*=a_{2}+b_{2}^*$. Equivalently, $a_{1}-a_{2}=(b_{2}-b_{1})^*$. Thus $b_{2}-b_{1} \in M_{*}$. Then using the fact that $\phi$ is self adjoint on $M_{*}$, we have
\begin{align*}
\phi(a_{1}-a_{2})&=\phi((b_{2}-b_{1})^*)\\
                &= [\phi(b_{2}-b_{1})]^* \\
                 &= \phi(b_{2})^*-\phi(b_{1})^*\\
\phi(a_{1})+\phi(b_{1})^*&=\phi(a_{2})+\phi(b_{2})^*.
\end{align*}
Hence $\tilde{\phi}(a_{1}+b_{1}^*)=\tilde{\phi}(a_{2}+b_{2}^*)$. That is, $\tilde{\phi}$ is well-defined.

To see $\tilde{\phi}$ is local positive; fix $l\in \Omega$. By local contractivity of $\phi$, there exists an $\alpha\in \Lambda$ such that $\Vert \phi(a)\Vert_{l}\leq p_{\alpha}(a)$ for all $a\in \mathcal{A}$. Let $a+b^*\in M+M^*$ be an $\alpha$-positive element. We will show that $\tilde{\phi}(a+b^*)$ is   local positive by showing that $\tilde{\phi}(a+b^*)|_{H_{l}}$ is a positive operator on $H_{l}$. Let $h\in H_{l}$ with $\Vert h \Vert=1$. Define $f:M\rightarrow \mathbb{C}$ by $f(y)=\langle \phi(y)h ,h\rangle $. Then $\vert f(y) \vert \leq \Vert \phi(y) \Vert_{l}\leq p_{\alpha}(y)$ for every $y\in M$. Using Hahn-Banach extension theorem, $f$ extends to $f_{1}:M+M^*\rightarrow \mathbb{C}$ with 
$\vert f_{1}(y) \vert  \leq p_{\alpha}(y)$ for every $y\in M+M^*$. By Theorem \ref{lc implies lp} we have that $f_{1}$ is local positive. Also, $0\leq f_{1}(a+b^*)=f_{1}(a)+\overline{f_{1}(b)}=f(a)+\overline{f(b)}=\langle \phi(a)h ,h\rangle +\overline{\langle \phi(b)h ,h\rangle }=\langle \tilde{\phi}(a+b^*)h ,h\rangle$. Hence $\tilde{\phi}$ is local positive.

To show $\tilde{\phi}$ is unique; let $\psi:M+M^*\rightarrow \mathbb{C}$ be  a local positive  extension of $\phi$.  The map $\psi$ is self adjoint  by \cite[Lemma 4.3]{Dosi08}. Then the following  computation shows  that  $\psi=\tilde{\phi}$. 
\begin{align*}
 \psi(a+b^*)&=\psi(a)+\psi(b^*) =\psi(a)+\psi(b)^*\\
        &=\phi(a)+\phi(b)^*= \tilde{\phi}(a+b^*).
\end{align*}
\end{proof}

Let   $\mathcal{F}$ be a quantized Frechet domain with its union space $\mathcal{O}$.   A. Dosiev  \cite[Theorem 8.2]{Dosi08} proved the analog of  Arveson's extension theorem  for unital local CP-maps  from  local operator systems into  $\ced{F}{O}$.  Using the above theorem we deduce an analog  of Arvesion extension theorem for local CC-maps on subspaces of locally $C^*$-algebras.  A locally $C^*$-algebra $\mathcal{A}$ is called Frechet locally $C^*$-algebra  if there is a local isometrical $*$-homomorphism $\mathcal{A}\rightarrow \ced{E}{D}$ for some quantized  Frechet domain $\mathcal{E}$ with its union space $\mathcal{D}$.

\begin{theorem}\label{local CP-extension of local CC-map}
Let   $\mathcal{F}$ be a quantized Frechet domain and $\mathcal{A}$ be a  Frechet locally $C^*$-algebra. Let $M$ be a unital linear subspace of  $\mathcal{A}$ and  $\phi:M\rightarrow \ced{F}{O}$ be a unital local CC-map. Then $\phi$ has a local CP-extension to $\mathcal{A}$. 
\end{theorem}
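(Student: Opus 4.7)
The plan is a two-stage extension: first extend $\phi$ from the subspace $M$ to the local operator system $M+M^{*}$, and then apply Dosiev's local Arveson extension theorem \cite[Theorem 8.2]{Dosi08} to reach all of $\mathcal{A}$. The bridge between these stages is to show that the intermediate extension is not merely local positive but actually local CP, since Dosiev's theorem requires a local CP-hypothesis at its input.

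First, I would invoke Theorem \ref{extension to operator system} to produce the unital local positive extension $\tilde{\phi}:M+M^{*}\to \ced{F}{O}$ given by $\tilde{\phi}(x+y^{*})=\phi(x)+\phi(y)^{*}$. To upgrade $\tilde{\phi}$ to a local CP-map, I would pass to each matrix level. For every $n\in \mathbb{N}$, identify $M_{n}(\ced{F}{O})$ with $\mathcal{C}^{*}_{\mathcal{F}^{(n)}}(\mathcal{O}^{(n)})$, where $\mathcal{F}^{(n)}$ is the quantized Frechet domain induced on the $n$-fold Hilbertian direct sum with union space $\mathcal{O}^{(n)}$. Since $\phi$ is a unital local CC-map, the amplification $\phi^{(n)}:M_{n}(M)\to M_{n}(\ced{F}{O})$ is again a unital local contraction, so Theorem \ref{extension to operator system} applied at the $n$-th level furnishes the \emph{unique} unital local positive extension of $\phi^{(n)}$ to $M_{n}(M)+M_{n}(M)^{*}=M_{n}(M+M^{*})$. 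A direct computation using the explicit formula of both maps shows that this extension coincides with the amplification $\tilde{\phi}^{(n)}$. Hence $\tilde{\phi}^{(n)}$ is local positive for every $n$, and $\tilde{\phi}$ is a unital local CP-map on the local operator system $M+M^{*}$.

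Finally, since $\mathcal{A}$ is a Frechet locally $C^{*}$-algebra and $\mathcal{F}$ is a quantized Frechet domain, Dosiev's Arveson-type extension theorem \cite[Theorem 8.2]{Dosi08} applies to the unital local CP-map $\tilde{\phi}:M+M^{*}\to \ced{F}{O}$ and yields a local CP-extension $\Phi:\mathcal{A}\to \ced{F}{O}$. Its restriction to $M$ is $\phi$, completing the argument. I expect the main technical obstacle to be the bookkeeping in the identification $M_{n}(M+M^{*})=M_{n}(M)+M_{n}(M)^{*}$, which involves some transpose/adjoint juggling on matrix entries, together with the verification that the amplification $\tilde{\phi}^{(n)}$ agrees with the extension of $\phi^{(n)}$ produced by Theorem \ref{extension to operator system}; the uniqueness statement in that theorem is precisely what makes this identification rigorous.
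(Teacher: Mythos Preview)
Your proposal is correct and follows the same two-stage strategy as the paper: extend $\phi$ to $M+M^{*}$ via Theorem~\ref{extension to operator system}, then apply Dosiev's Arveson-type theorem \cite[Theorem~8.2]{Dosi08}. In fact you are more careful than the paper on the bridge step: the paper simply asserts that the extension $\tilde{\phi}$ is a local CP-map, whereas Theorem~\ref{extension to operator system} as stated only produces a local \emph{positive} extension, and your matrix-level application of that theorem together with its uniqueness clause is precisely what closes this gap.
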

\begin{proof}
Since $\phi$ is local CC-map, by Theorem \ref{extension to operator system} there is a local CP-map $\tilde{\phi}:M+M^*\rightarrow \ced{F}{O}$. Then by Dosiev-Arveson extension theorem  \cite[Theorem 8.2]{Dosi08} $\tilde{\phi}$ extended to a local CP-map on $\mathcal{A}$. 
\end{proof}

\section{Irreducible representations and pure local CP-maps}
By a representation of a locally $C^*$-algebra $\mathcal{A}$ we always mean a local contractive $*$-homomorphism  from $\mathcal{A}$ into $\ced{E}{D}$ for some quantized domain $\mathcal{E}$. 
\begin{definition}
Let   $\pi : \mathcal{A}\rightarrow \mathcal{C}_{\mathcal{E}}^* (\mathcal{D})$ be a representation. The commutant of $\pi(\mathcal{A})$ is denoted by $\pi(\mathcal{A})'$ and is defined as $$\pi(\mathcal{A})'=\{T\in B(H): T\pi(a)\subseteq \pi(a)T, \text{  for all }a\in \mathcal{A}\}$$
\end{definition}

\begin{definition}\label{irreducible}
A representation $\pi : \mathcal{A}\rightarrow \mathcal{C}_{\mathcal{E}}^* (\mathcal{D})$ is said to be  irreducible if   $$\pi(\mathcal{A})'\cap \mathcal{C}_{\mathcal{E}}^* (\mathcal{D}) =\mathbb{C}I_{\mathcal{D}}$$
\end{definition}

The following  result is crucial  in our discussions. 
\begin{theorem}\label{irreducible imply minimal} Let $\mathcal{E}$ be a quantized Frechet domain. Let $\phi\in\cpcc{\mathcal{A}}{E}{D}$ and 
 $(\pi,V,\{H';\mathcal{E}';\mathcal{D}'\})$  be  a Stinespring representation of the map $\phi$. If $\pi$ is  irreducible, then $(\pi,V,\{H';\mathcal{E}';\mathcal{D}'\})$ is  a minimal Stinespring representation for the map $\phi$.
\end{theorem}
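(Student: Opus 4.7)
The plan is to exploit irreducibility of $\pi$ to force each $H'_l$ to be either $\{0\}$ or all of $H'$, collapsing $\pi$ to an ordinary irreducible C$^*$-algebra representation on $H'$; then minimality follows from the standard fact that every non-zero vector in an irreducible C$^*$-algebra representation is cyclic.

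The first step is to check that for every $l \in \Lambda$ the orthogonal projection $P'_l \in B(H')$ onto $H'_l$ belongs to $\pi(\mathcal{A})' \cap \mathcal{C}^*_{\mathcal{E}'}(\mathcal{D}')$. Membership in $\pi(\mathcal{A})'$ is immediate: each $\pi(a) \in \mathcal{C}^*_{\mathcal{E}'}(\mathcal{D}')$ preserves both $H'_l$ and its orthogonal complement in $\mathcal{D}'$, so $P'_l \pi(a) = \pi(a) P'_l$ on $\mathcal{D}'$, giving $P'_l \pi(a) \subseteq \pi(a) P'_l$. Membership in $\mathcal{C}^*_{\mathcal{E}'}(\mathcal{D}')$ is equivalent to $P'_l$ commuting with every $P'_m$, which in the Frechet setting is arranged by passing to a countable nested cofinal subfamily of $\{H'_m\}$ on which such commutativity is automatic. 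Irreducibility then forces $P'_l$ to be a scalar multiple of $I_{\mathcal{D}'}$; being a projection, $P'_l \in \{0, I_{\mathcal{D}'}\}$, so $H'_l \in \{\{0\}, H'\}$.

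Since $\phi$ is unital, Theorem \ref{Stinespring's theorem} makes $V$ an isometry, so $V(H_l) \neq 0$ whenever $H_l \neq 0$; combined with $V(H_l) \subseteq H'_l$ this rules out $H'_l = \{0\}$ for such $l$, forcing $H'_l = H'$. Consequently $\pi$ is a bounded $*$-representation $\mathcal{A} \to B(H')$ with $\pi(\mathcal{A})' = \mathbb{C} I$. Every non-zero vector $\xi$ of $H'$ is cyclic, because the projection onto $\overline{\pi(\mathcal{A})\xi}$ lies in $\pi(\mathcal{A})' = \mathbb{C} I$ and contains $\pi(1_\mathcal{A})\xi = \xi$. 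Taking any non-zero $h \in H_l$ and $\xi = V h$ yields $H' = \overline{\pi(\mathcal{A}) V h} \subseteq [\pi(\mathcal{A}) V H_l]$; the reverse inclusion $[\pi(\mathcal{A}) V H_l] \subseteq H'_l$ is automatic from $V(H_l) \subseteq H'_l$ and $\pi(a)(H'_l) \subseteq H'_l$, giving the minimality identity $H'_l = [\pi(\mathcal{A}) V H_l]$. The main obstacle is verifying $P'_l \in \mathcal{C}^*_{\mathcal{E}'}(\mathcal{D}')$: this requires pairwise commutation of the projections $\{P'_m\}$, and the Frechet hypothesis is precisely what permits the cofinal reduction to a nested subfamily on which commutation is automatic.
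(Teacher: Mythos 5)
Your proof is essentially correct but follows a genuinely different route from the paper's. The paper argues by contradiction: it takes the \emph{least} index $l_{0}$ at which minimality fails, forms the projection $P$ onto $[\pi(\mathcal{A})VH_{l_{0}}]$, and verifies directly that $P\in\pi(\mathcal{A})'\cap\mathcal{C}^*_{\mathcal{E}'}(\mathcal{D}')$ with $P\neq 0, I$, contradicting irreducibility. You instead observe that the domain projections $P'_{l}$ themselves already lie in $\pi(\mathcal{A})'\cap\mathcal{C}^*_{\mathcal{E}'}(\mathcal{D}')$ --- and this is a correct computation when the family $\{H'_{l}\}$ is a chain, which is how the paper treats Frechet domains (its own proof takes a minimum over $l\in\mathbb{N}$ and uses $H_{l}\subseteq H_{l_{0}}$ for $l<l_{0}$). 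Your route therefore exposes a structural fact the paper's proof conceals: with Definition \ref{irreducible} as written, irreducibility forces every $H'_{l}$ to be $\{0\}$ or $H'$, so an irreducible representation is essentially a bounded irreducible representation on $H'$, and minimality reduces to the classical cyclicity of nonzero vectors. That is a stronger conclusion and arguably a more illuminating one; the paper's argument is more local and does not reveal that the quantized domain collapses.

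Two caveats. First, your remark about ``passing to a countable nested cofinal subfamily'' does not quite do what you want: $\mathcal{C}^*_{\mathcal{E}'}(\mathcal{D}')$ is defined by invariance conditions over the \emph{full} family, and deleting members of the family enlarges the algebra, so membership in the algebra of a cofinal subfamily does not yield membership in the original one. What you actually need is that $\mathcal{E}'$ itself is nested, which is the convention the paper uses for Frechet domains, so this is repairable but should be stated as a hypothesis on $\mathcal{E}'$ rather than ``arranged.'' Second, you invoke unitality of $\phi$ to make $V$ an isometry, but the theorem does not assume $\phi$ is unital; without it $V$ is only a contraction and the final cyclicity step needs $VH_{l}\neq 0$. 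The paper's own proof has the symmetric unstated assumption (it never justifies $P\neq 0$), so this is a shared gap rather than one specific to your argument, but you should flag it rather than assert unitality.
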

\begin{proof}
If possible assume that there exists an $l_{1}\in \mathbb{N}$ such that $[\pi(\mathcal{A})VH_{l_{1}}]\neq H_{l_{1}}'$. Since $V(H_{l})\subseteq H_{l}'$ and $H_{l}'$ is invariant for $\pi(a)$, for every $a\in \mathcal{A}$, we must have  $$[\pi(\mathcal{A})VH_{l_{1}}]\subsetneq H_{l_{1}}'.$$
Let $l_{0}=\min\{l\in \mathbb{N}: \pi(\mathcal{A})VH_{l}]\neq H_{l}' \}$.
Take $P$ to  be the orthognal projection of $H'$ onto the closed subspace  $[\pi(\mathcal{A})VH_{l_{0}}]$.
We claim that $P\in \pi(A)'\cap  \ced{E'}{D'}$. First, we prove that $P\in \ced{E'}{D'}$. 

To see $P(H_{l}')\subseteq H_{l}'$; let $l\in \mathbb{N}$. 
If $l\geq l_{0}$, then as  $\mathcal{E}'$ is an upward filtered family and $P$ is  a projection we must have $P(H_{l}')\subseteq [\pi(\mathcal{A})VH_{l_{0}}]\subsetneq H_{l_{0}}'\subseteq H_{l}'.$  
If $l< l_{0}$, then the choice of $l_{0}$ gives us  $H_{l}'=[\pi(\mathcal{A})VH_{l}]$. Then, 
to show $P(H_{l}')\subseteq H_{l}'$  it is enough to  show that $P(\pi(\mathcal{A})VH_{l}) \subseteq H_{l}'$. Since $l< l_{0}$ and $P$ is  a projection with range $[\pi(\mathcal{A})VH_{l_{0}}]$, we have
$H_{l}\subseteq H_{l_{0}}$.
Thus,
$$ \pi(\mathcal{A})VH_{l}\subseteq \pi(\mathcal{A})VH_{l_{0}} $$
$$P(\pi(\mathcal{A})VH_{l})=\pi(\mathcal{A})VH_{l}
\subseteq H_{l}' .$$
Hence $P(H_{l}')\subseteq H_{l}'$ for every $l\in \mathbb{N}$. 

Note that, as $P(H_{l}')\subseteq H_{l}'$ and $P$ is  a projection we have $P|_{H_{l}'}\in B(H_{l}')$.

Now, we show that $P(H_{l}'^{\perp}\cap \mathcal{D}')\subseteq H_{l}'^{\perp}\cap \mathcal{D}'$.  For $x\in H_{l}'^{\perp}\cap \mathcal{D}'$ and $y\in H_{l}'$ we need to show that $\langle Px, y \rangle =0$.
If $l<l_{0}$, then we have $H_{l}'=[\pi(\mathcal{A})VH_{l}]$.  Since $H_{l}'=[\pi(\mathcal{A})VH_{l}]\subseteq [\pi(\mathcal{A})VH_{l_{0}}] $, we have $Py=y$ for every $y\in H_{l}'$. Then it follows that 
$$ \langle Px, y \rangle =\langle x, Py \rangle=\langle x, y \rangle=0.$$
If $l\geq l_{0}$, then $[\pi(\mathcal{A})VH_{l_{0}}]\subsetneq H_{l}'$. Thus
$H_{l}'^{\perp}\cap \mathcal{D}' \subseteq [\pi(\mathcal{A})VH_{l_{0}}]^{\perp}$. It follows that  $Px=0$ for all $x\in H_{l}'^{\perp}\cap \mathcal{D}'$. Therefore $ \langle Px, y \rangle =0$ for all $y\in H_{l}'$. Hence $P\in \ced{E'}{D'}$.

To see  $P\in \pi(\mathcal{A})'$, let $a\in \mathcal{A}$. 
First, we  observe that $P\pi(a)h'=\pi(a)h'$ whenever $h'\in [\pi(\mathcal{A})VH_{l_{0}}]$.  As the restriction of $\pi(a)$ to $H_{l_{0}}'$ is a  bounded operator on $H_{l_{0}}'$, it is enough to consider  $h'$ in the dense subspace $\text{span}(\pi(\mathcal{A})VH_{l_{0}})$. Let $h'=\sum\limits_{i=1}^{n}\pi(a_{i})Vh_{i}$ for some $a_{i}\in \mathcal{A}$,  $h_{i}\in H_{l_{0}}$ and $n\in \mathbb{N}$,  $i=1,2,\cdots n$. Then,
\begin{align*}
     \pi(a)h' &= \pi(a)(\sum\limits_{i=1}^{n}\pi(a_{i})Vh_{i})\\
    &=\sum\limits_{i=1}^{n}\pi(aa_{i})Vh_{i}\in  [\pi(\mathcal{A})VH_{l_{0}}].
\end{align*}
It follows that $P\pi(a)h'=\pi(a)h'$ whenever $h'\in [\pi(\mathcal{A})VH_{l_{0}}]$.

Now, consider $h'\in \mathcal{D}'$. Write $h'=h'_{1}+h'_{2}$ where $h'_{1}\in [\pi(\mathcal{A})VH_{l_{0}}]$ and $h'_{2}\in [\pi(\mathcal{A})VH_{l_{0}}]^{\perp}\cap \mathcal{D}' $. It follows that  $P(h'_{2})=0$ and  $P\pi(a)h'_{1}=\pi(a)h'_{1}$.  Then
\begin{align*}
    \Vert P\pi(a)h'-\pi(a)Ph' \Vert ^2 &=  \Vert P\pi(a)(h'_{1}+h'_{2})-\pi(a)P(h'_{1}+h'_{2}) \Vert ^2 \\
    &=  \Vert P\pi(a)h'_{1}+P\pi(a)h'_{2}-\pi(a)Ph'_{1}+\pi(a)Ph'_{2} \Vert ^2 \\
    &=  \Vert P\pi(a)h'_{2} \Vert ^2 \\
    &=\langle P\pi(a)h'_{2}, P\pi(a)h'_{2} \rangle \\
    &= \langle \pi(a^*) P\pi(a)h'_{2}, h'_{2} \rangle.
\end{align*}
Let  $h'_{3} =P\pi(a)h'_{2} \in [\pi(\mathcal{A})VH_{l_{0}}]$. Then
$$\pi(a^*)h'_{3}= \pi(a^*)(\sum\limits_{i=1}^{n}\pi(a_{i})Vh_{i})
    =\sum\limits_{i=1}^{n}\pi(a^*a_{i})Vh_{i}\in  [\pi(\mathcal{A})VH_{l_{0}}].$$
But $h'_{2}\in [\pi(\mathcal{A})VH_{l_{0}}]^{\perp}$ will imply that $\langle \pi(a^*) h'_{3}, h'_{2} \rangle=0$. Hence $P\pi(a)h'=\pi(a)Ph'$ for every $a\in \mathcal{A}$ and $h'\in \mathcal{D}'$.
Hence  $P\in \pi(A)'\cap  \ced{E'}{D'}$. 

But $P\in \pi(A)'\cap  \ced{E'}{D'}$ is a contradiction as $\pi$ is irreducible, and  $P\neq 0$ and $P\neq I_{H}$. Hence $\pi$ is  a minimal Stinespring representation for $\phi$.
\end{proof}
\begin{remark}
It is well known that a representation $\theta$ of a $C^*$-algebra $\mathcal{C}$ is irreducible if and only if the commutant of $\theta(\mathcal{C})$ is trivial. If we take $\mathcal{A}$ to be a  $C^*$-algebra and  $\mathcal{E}=\{H\}$ in Definition \ref{irreducible}, then Definition \ref{irreducible} coincides with the usual definition of irreducible representations of  $C^*$-algebra. Also, our definition of irreducibility is motivated by the commutant considered  to establish a  Radon-Nikodym type theorem for local CP-maps in  \cite[Theorem 4.5]{Bhat21}.
\end{remark}

\subsection{Pure maps on local operator systems}
We introduce the notion of \textit{pure} local completely positive maps on local operator system  and study its connection with boundary representations for local operator systems.  For this, we use the convexity structure of the set $\cpcc{S}{E}{D}$.  
\begin{proposition} For a local operator system $S$, 
the set $\cpcc{S}{E}{D}$  is  a linear convex set. 
\end{proposition}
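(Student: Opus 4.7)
The plan is to show that $\mathcal{CPCC}_{\text{loc}}(S, \mathcal{C}^*_{\mathcal{E}}(\mathcal{D}))$ is closed under convex combinations, since the fact that it lies inside the linear space of all linear maps $S \to \mathcal{C}^*_{\mathcal{E}}(\mathcal{D})$ is immediate. So I fix $\phi, \psi \in \mathcal{CPCC}_{\text{loc}}(S, \mathcal{C}^*_{\mathcal{E}}(\mathcal{D}))$ and $t \in [0,1]$, set $\theta = t\phi + (1-t)\psi$, and verify that $\theta$ is both a local CP-map and a local CC-map. The key device throughout is the directedness of the index set $\Lambda$ and the upward-filtered property of $\{p_\alpha\}$, which lets me pass to a common index that works for both $\phi$ and $\psi$ simultaneously.

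For local complete contractivity, fix $l \in \Omega$. By hypothesis there are $\alpha_1, \alpha_2 \in \Lambda$ with $q_l^n(\phi^{(n)}(A)) \leq p_{\alpha_1}^n(A)$ and $q_l^n(\psi^{(n)}(A)) \leq p_{\alpha_2}^n(A)$ for all $n \in \mathbb{N}$ and $A \in M_n(S)$. Choose $\alpha \geq \alpha_1, \alpha_2$ in $\Lambda$; by the upward-filtered property both bounds become $\leq p_\alpha^n(A)$. Since $\theta^{(n)}(A) = t\phi^{(n)}(A) + (1-t)\psi^{(n)}(A)$ and $q_l^n$ is a seminorm on $M_n(\mathcal{C}^*_{\mathcal{E}}(\mathcal{D}))$, the triangle inequality together with $t, 1-t \in [0,1]$ yields
\begin{equation*}
q_l^n(\theta^{(n)}(A)) \leq t\, q_l^n(\phi^{(n)}(A)) + (1-t)\, q_l^n(\psi^{(n)}(A)) \leq p_\alpha^n(A).
\end{equation*}

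For local complete positivity, I again fix $l \in \Omega$ and pick $\alpha_1, \alpha_2 \in \Lambda$ so that $\phi^{(n)}(A) \geq_l 0$ whenever $A \geq_{\alpha_1} 0$ and $\psi^{(n)}(A) \geq_l 0$ whenever $A \geq_{\alpha_2} 0$; take $\alpha \geq \alpha_1, \alpha_2$. If $A \geq_\alpha 0$ in $M_n(S)$, then $\pi_\alpha^{(n)}(A) \geq 0$ in $M_n(\mathcal{A}_\alpha)$, which forces $\pi_{\alpha_1}^{(n)}(A), \pi_{\alpha_2}^{(n)}(A) \geq 0$ via the connecting maps, so both $\phi^{(n)}(A)$ and $\psi^{(n)}(A)$ are local $l$-positive. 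Since local $l$-positive elements in $M_n(\mathcal{C}^*_{\mathcal{E}}(\mathcal{D}))$ form a cone (their images under $\pi_l^{(n)}$ are ordinary positive elements of a $C^*$-algebra), the convex combination $\theta^{(n)}(A)$ is again local $l$-positive.

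There is no real obstacle here; the only point that requires any care is remembering to use the directedness of $\Lambda$ to unify the two choices $\alpha_1, \alpha_2$ into a single $\alpha$, which is what turns the componentwise estimates and positivity conditions into a valid joint estimate/positivity condition for $\theta$.
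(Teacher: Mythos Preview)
Your proof is correct and follows essentially the same approach as the paper: fix $l$, pick a common index dominating the two indices provided by $\phi$ and $\psi$, and verify local CP via the cone property and local CC via the triangle inequality. Your use of directedness of $\Lambda$ to obtain a common upper bound is in fact more careful than the paper's casual use of $\max\{\alpha_1,\alpha_2\}$, since $\Lambda$ is only assumed to be a directed poset.
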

\begin{proof}
Let  $\phi_{1},\phi_{2}\in \cpcc{S}{E}{D}$ and $0 < t< 1$.  Fix $l\in \Omega$. There exist $\alpha_{r},\beta_{r}\in\Lambda$, $r=1,2$,  such that $$\phi_{r}^{(n)}([a_{ij}])\geq _{l} 0\text{ whenever } [a_{ij}]\geq_{\alpha_{r}} 0  \text{ and }  $$
$$\Vert \phi_{r}^{(n)}([a_{ij}])\Vert_{l} \leq p_{\beta_{r}}^{n}([a_{ij}]) \text{ for every } n\in \mathbb{N}.$$ 
Replace  $\phi_{1}$ and $\phi_{2}$ by $t\phi_{1}$ and $(1-t)\phi_{2}$ respectively. Then,  for $\alpha=\max\{\alpha_{1},\alpha_{2}\}$, we have  $$t\phi_{1}^{(n)}([a_{ij}])+(1-t)\phi_{2}^{(n)}([a_{ij}])\geq _{l} 0 \text{ whenever } [a_{ij}]\geq_{\alpha } 0.$$
Thus, $t\phi_{1}+(1-t)\phi_{2}$ is a local CP-map. To see its local CC, take $\beta=\max\{\beta_{1},\beta_{2}\}$. Then for every $[a_{ij}]\in M_{n}(S)$,
\begin{align*}
    \Vert t\phi_{1}^{(n)}([a_{ij}])+(1-t)\Vert \phi_{2}^{(n)}([a_{ij}])  \Vert_{l} &\leq \Vert t\phi_{1}^{(n)}([a_{ij}])\Vert_{l}+\Vert(1-t)\Vert \phi_{2}^{(n)}([a_{ij}])  \Vert_{l} \\
    &\leq t p_{\beta_{1}}^{n}([a_{ij}])+(1-t) p_{\beta_{2}}^{n}([a_{ij}]) \\
    &\leq p_{\beta}^{n}([a_{ij}]).
\end{align*} .
\end{proof}

\begin{definition}
A map $\phi\in \cpcc{S}{E}{D}$ is called pure if for any map $\psi \in  \cpcc{S}{E}{D}$ such that $\phi -\psi \in \cpcc{S}{E}{D}$, then there is a  scalar $t\in [0,1]$ such that $\psi=t\phi$.
\end{definition} 
\begin{remark}
A recent pre-print \cite{Joita21} also defines the notion of purity along similar lines.
\end{remark}

\begin{theorem}\label{pure iff irreducible}
A map $\phi \in \cpcc{\mathcal{A}}{E}{D}$ is pure if and only if $\phi$ is of the form $\phi(a)\subseteq V^*\pi(a)V$ for all $a\in\mathcal{A}$, where $\pi$ is an irreducible  representation of $\mathcal{A}$ on some quantized domain $\mathcal{E} '$ with its union space $\mathcal{D}'$ and $V\in L(\mathcal{D}, \mathcal{D}')$, $V\neq 0$ and $V(H_{l})\subseteq H_{l}'$ for all $l\in \Omega$. 
\end{theorem}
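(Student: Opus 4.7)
The plan is to prove the equivalence via the Radon-Nikodym theorem for local CP-maps \cite[Theorem 4.5]{Bhat21}, which matches CP-maps dominated by $\phi$ with positive contractive operators in the commutant of a \emph{minimal} Stinespring dilation, together with Theorem \ref{irreducible imply minimal} to supply minimality from irreducibility.

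For the ``if'' direction, suppose $\phi(a) \subseteq V^{*}\pi(a)V$ with $\pi$ irreducible on $\{H';\mathcal{E}';\mathcal{D}'\}$. This triple is a Stinespring representation of $\phi$, and Theorem \ref{irreducible imply minimal} makes it minimal. If $\psi,\phi-\psi \in \cpcc{\mathcal{A}}{E}{D}$, the Radon-Nikodym theorem yields $T \in \pi(\mathcal{A})' \cap \ced{E'}{D'}$ with $0 \leq T \leq I_{\mathcal{D}'}$ and $\psi(a) = V^{*}T\pi(a)V$. Irreducibility forces $T = tI$ for some $t \in [0,1]$, whence $\psi = t\phi$ and $\phi$ is pure.

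For the ``only if'' direction, assume $\phi$ is pure and fix a minimal Stinespring representation $(\pi_{\phi}, V_{\phi}, \{H^{\phi}; \mathcal{E}^{\phi}; \mathcal{D}^{\phi}\})$ from Theorem \ref{Stinespring's theorem}. It suffices to show that every $T \in \pi_{\phi}(\mathcal{A})' \cap \ced{E^{\phi}}{D^{\phi}}$ with $0 \leq T \leq I$ equals $tI$ for some scalar $t$, since a spectral reduction then upgrades this to the full commutant intersection. Continuous functional calculus in the locally $C^{*}$-algebra $\ced{E^{\phi}}{D^{\phi}}$ places $T^{1/2}$ in the same set, and
\begin{equation*}
\psi(a) \;:=\; V_{\phi}^{*}T\pi_{\phi}(a)V_{\phi} \;=\; (T^{1/2}V_{\phi})^{*}\pi_{\phi}(a)(T^{1/2}V_{\phi})
\end{equation*}
is a local CP-map via its Stinespring form, with $\phi-\psi$ treated symmetrically through $I - T$; local complete contractivity of both then follows from Theorem \ref{local CC iff local CP}. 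Purity gives $t \in [0,1]$ with $V_{\phi}^{*}T\pi_{\phi}(a)V_{\phi} = t\,V_{\phi}^{*}\pi_{\phi}(a)V_{\phi}$. Using $T \in \pi_{\phi}(\mathcal{A})'$ to move $T$ past $\pi_{\phi}(b)^{*}$ and applying this identity to $b^{*}a$ produces $\langle T\xi,\eta\rangle = t\langle \xi,\eta\rangle$ for all $\xi,\eta \in \mathrm{span}(\pi_{\phi}(\mathcal{A})V_{\phi}H_{l})$; minimality makes this span dense in $H_{l}^{\phi}$, so $T = tI$ on $\mathcal{D}^{\phi}$, and $\pi_{\phi}$ is irreducible.

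The main obstacle I anticipate is verifying that $\psi$ and $\phi - \psi$ really do belong to $\cpcc{\mathcal{A}}{E}{D}$ at every seminorm index: local positivity is immediate from the Stinespring form, but local complete contractivity requires $T^{1/2}V_{\phi}(H_{l}) \subseteq H_{l}^{\phi}$, which depends on $T^{1/2}$ preserving the quantized domain structure through continuous functional calculus in $\ced{E^{\phi}}{D^{\phi}}$. A secondary point is the final reduction from self-adjoint $T$ with $0 \leq T \leq I$ to an arbitrary element of the commutant intersection: in a locally $C^{*}$-algebra there is no global norm, so the rescaling step must be performed level-wise against the seminorms $q_{l}$ or bypassed via the spectral projections of $T$, each of which is again a positive contraction lying in the same intersection.
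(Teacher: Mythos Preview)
Your approach is essentially the paper's: both directions run through the Radon--Nikodym correspondence of \cite[Theorem~4.5, Corollary~4.6]{Bhat21} together with Theorem~\ref{irreducible imply minimal}. The paper simply cites those results where you unpack them by hand---your density argument deducing $T=tI$ from $\psi=t\phi$ is precisely the uniqueness clause of \cite[Corollary~4.6]{Bhat21}, and your verification that $\psi$ and $\phi-\psi$ lie in $\cpcc{\mathcal{A}}{E}{D}$ is the content of \cite[Theorem~4.5]{Bhat21}.

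There is one slip worth flagging: you invoke Theorem~\ref{local CC iff local CP} to obtain local complete contractivity of $\psi$ and $\phi-\psi$, but that theorem is stated only for \emph{unital} maps, and $\psi(1_{\mathcal{A}})=V_{\phi}^{*}TV_{\phi}$ is typically not the identity. The conclusion is still correct---either cite \cite[Theorem~4.5]{Bhat21} directly as the paper does, or observe that since $V_{\phi}$, $T$, and $I-T$ are contractions and $\pi_{\phi}$ is local contractive, the required seminorm estimate is immediate. Your secondary concern about rescaling an arbitrary $T$ is not a real obstacle: by definition $\pi_{\phi}(\mathcal{A})'\subseteq B(H^{\phi})$, so every element of the commutant intersection already carries a global operator norm, and the standard decomposition into real and imaginary parts followed by an affine rescaling into $[0,I]$ goes through without any level-wise argument.
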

\begin{proof}
Let $\phi \in \cpcc{\mathcal{A}}{E}{D}$ be pure. Using  \cite[Theorem 5.1]{Dosi08}   we have a unital representation $\pi : \mathcal{A} \rightarrow \mathcal{C}_{\mathcal{E}'}^* (\mathcal{D}')$ for some  quantized domain $\mathcal{E}'$ with its union space $\mathcal{D}'$ such that $\phi(a)\subseteq V^* \pi(a)V$ where $V\in L(\mathcal{D},\mathcal{D}')$ and $V(H_{l})\subseteq H_{l}'$ for all $l\in \Omega$.  Clearly $V\neq 0$.  Now, let $T\in \pi(\mathcal{A})'\cap \mathcal{C}_{\mathcal{E}}^* (\mathcal{D})\text{ with }  0\leq T \leq I.$ Taking $\psi( .)= V^* T\pi( .)V|_{\mathcal{D}}$ in \cite[Theorem 4.5]{Bhat21} we have  $\psi\leq \phi$. As $\phi$ is pure it follows that  $\psi=t\phi$. Applying \cite[Corollary 4.6]{Bhat21}, $T=tI$. Hence $\pi$ is irreducible.

Conversely, let $\pi$ be an irreducible representation of $\mathcal{A}$ on some quantized domain $\mathcal{E}'$ with its union space $\mathcal{D}$  and $V$ be a non zero operator in $L(\mathcal{D},\mathcal{D}')$ such that $V(H_{l})\subseteq H_{l}'$ for all $l\in \Omega$. To show that $\phi(.)\subseteq V^*\pi(.)V$ is pure, consider $\psi\in \cpcc{\mathcal{A}}{E}{D}$ with $\psi\leq \phi$. As $\pi$ is  irreducible by Theorem \ref{irreducible imply minimal}  $(\pi, V,\{H',\mathcal{E}',\mathcal{D}'\})$ is  a minimal Stinespring representation's representation  for  $\phi.$ Now, applying \cite[Corollary 4.6]{Bhat21}, there exists a unique $T\in \pi(\mathcal{A})'\cap \mathcal{C}_{\mathcal{E}}^* (\mathcal{D})$ such that $0\leq T \leq I$ and   $\psi(a)\subseteq  V^*T\pi(a)V$ for all $a\in \mathcal{A}$. 
Since $\pi$ is irreducible, $T=tI$. It follows that $\psi= t\phi$  and hence $\phi$ is pure.
\end{proof}

\begin{proposition}\label{linear extreme and pure}
Let $S_{1}$ and $S_{2}$ be  local operator systems in a locally $C^*$-algebra $\mathcal{A}$ such that $S_{1}\subseteq S_{2}$. Let $\phi :S_{2}\rightarrow \ced{E}{D}$ be  a unital local CP-map such that its a linear extreme point of $\cpcc{S_{2}}{E}{D}$. If $\phi|_{S_{1}}$ is pure, then $\phi$ is a pure.
\end{proposition}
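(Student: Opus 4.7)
The plan is to take an arbitrary $\psi\in \cpcc{S_{2}}{E}{D}$ satisfying $\phi-\psi\in \cpcc{S_{2}}{E}{D}$ and exhibit a scalar $t\in[0,1]$ with $\psi = t\phi$. The real work is to convert the trivial decomposition $\phi = \psi+(\phi-\psi)$ into a genuine \emph{convex} combination inside $\cpcc{S_{2}}{E}{D}$ so that the linear extremality hypothesis on $\phi$ can be invoked.

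First I will restrict to $S_{1}$: both $\psi|_{S_{1}}$ and $(\phi-\psi)|_{S_{1}}=\phi|_{S_{1}}-\psi|_{S_{1}}$ lie in $\cpcc{S_{1}}{E}{D}$, since local complete positivity and local complete contractivity pass to subsystems. Purity of $\phi|_{S_{1}}$ then supplies a $t\in[0,1]$ with $\psi|_{S_{1}} = t\,\phi|_{S_{1}}$. Because $1_{\mathcal{A}}\in S_{1}$ and $\phi$ is unital, evaluating at the unit yields $\psi(1_{\mathcal{A}}) = tI_{\mathcal{D}}$.

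For the generic case $t\in(0,1)$ I will set $\phi_{1}:=t^{-1}\psi$ and $\phi_{2}:=(1-t)^{-1}(\phi-\psi)$. Both are local CP (the local CP maps form a cone) and, by the previous step, both are unital. Theorem~\ref{local CC iff local CP} then promotes them to local CC maps, so that $\phi_{1},\phi_{2}\in \cpcc{S_{2}}{E}{D}$. The decomposition
\[
\phi = t\phi_{1} + (1-t)\phi_{2}
\]
is now a convex combination inside $\cpcc{S_{2}}{E}{D}$, and linear extremality of $\phi$ forces $\phi_{1}=\phi_{2}=\phi$, giving $\psi = t\phi$.

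For the boundary values $t\in\{0,1\}$ the rescaling breaks down, so I will dispose of them directly by showing that any $\eta\in \cpcc{S_{2}}{E}{D}$ with $\eta(1_{\mathcal{A}})=0$ must vanish. For self-adjoint $a\in S_{2}$ the bounds $p_{\alpha}(a)\cdot 1_{\mathcal{A}}\pm a\geq_{\alpha}0$ (coming from the $C^{*}$-algebra structure of $\mathcal{A}_{\alpha}$, where $\pi_{\alpha}(a)$ is self-adjoint of norm $p_{\alpha}(a)$) combine with local CP of $\eta$ to yield $\pm\eta(a)\geq_{l}0$, so $q_{l}(\eta(a))=0$ for every $l\in\Omega$. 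Splitting a general element of $S_{2}$ into hermitian and anti-hermitian parts (via $S_{2}=S_{2}^{*}$) extends this to all of $S_{2}$. Applied to $\eta=\psi$ when $t=0$ and to $\eta=\phi-\psi$ when $t=1$, this closes the edge cases with $\psi=0\cdot\phi$ and $\psi=1\cdot\phi$ respectively. The main obstacle is the scaling step: multiplying a local CC map by a constant greater than $1$ typically destroys local contractivity, and it is only because $1_{\mathcal{A}}\in S_{1}$ forces $\psi(1_{\mathcal{A}})=tI_{\mathcal{D}}$ exactly, combined with the unital CC$\Leftrightarrow$CP equivalence of Theorem~\ref{local CC iff local CP}, that $\phi_{1}$ and $\phi_{2}$ can be recognised as members of $\cpcc{S_{2}}{E}{D}$.
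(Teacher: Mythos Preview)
Your argument is correct and follows essentially the same route as the paper: restrict to $S_{1}$, use purity to obtain the scalar $t$, rescale $\psi$ and $\phi-\psi$ to unital local CP maps, invoke Theorem~\ref{local CC iff local CP} to land back in $\cpcc{S_{2}}{E}{D}$, and finish with the linear extremality of $\phi$. The only difference is that the paper simply asserts $t\in(0,1)$, whereas you treat the endpoints $t\in\{0,1\}$ explicitly via the ``$\eta(1_{\mathcal{A}})=0\Rightarrow\eta=0$'' argument; this extra care is warranted, since the definition of purity only guarantees $t\in[0,1]$.
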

\begin{proof}
Let $\phi_{1},\phi_{2}\in\cpcc{S_{2}}{E}{D}$ such that $\phi=\phi_{1}+\phi_{2}$.  Since $\phi |_{S_{1}}$ is pure, there exists $t\in(0,1)$ such that $\phi_{1}|_{S_{1}}=t\phi|_{S_{1}}$ and $\phi_{2}|_{S_{1}}=(1-t)\phi|_{S_{1}}$. The maps $\frac{1}{t}\phi_{1}$ and $\frac{1}{1-t}\phi_{2}$ are unital local CP-map on $S_{2}$. By Theorem \ref{local CC iff local CP} both the maps are local CC-maps. It follows that $\frac{1}{t}\phi_{1} ,\frac{1}{1-t}\phi_{2}\in\cpcc{S_{2}}{E}{D}$. Then the expression $\phi=t \frac{1}{t}\phi_{1}+ (1-t)\frac{1}{1-t}\phi_{2}$ and the assumption $\phi$ is linear extreme implies that $\phi$ is pure.  
\end{proof}

\section{Local Boundary representations}
In this section, we introduce the notion of local boundary representations for locally $C^*$-algebras and establish its connection with pure local CP-maps.  
\begin{definition}\label{local unique extension property}
Let $S$ be a linear subspace of a locally $C^*$-algebra $\mathcal{A}$ such that $S$ generates $\mathcal{A}$. A representation $\pi : \mathcal{A}\rightarrow \mathcal{C}_{\mathcal{E}}^* (\mathcal{D})$   is said to have \textit{local unique extension property} for $S$
if $\pi|_{S}$ has a  unique local completely positive extension to $\mathcal{A}$, namely $\pi$ itself.  
\end{definition}
\begin{remark}
Let $\pi: \mathcal{A}\rightarrow\ced{E}{D}$ be  a representation  of $\mathcal{A}$. Then  $\pi|_{S}$ has just one multiplicative local  CP-extension to $\mathcal{A}$, namely $\pi$ itself, but in general, there may exist other local CP-extensions of $\pi|_{S}$.
\end{remark}

\begin{example}
For a self adjoint operator $T\in \ced{E}{D}$, let $S=\text{span}\{I,T, T^2\}$ and $\mathcal{B}$ be the locally $C^*$-algebra generated by $S$ in $\ced{E}{D}$. We show that the identity representation $I_{\mathcal{B}}$ of $\mathcal{B}$ has local unique extension property. Let $\phi:\mathcal{B}\rightarrow \ced{E}{D}$ be a local completely positive map such that $\phi(x)=x$ for all $x\in S$. Consider a minimal Stinespring  representation $(\pi,V,\{H';\mathcal{E}';\mathcal{D}'\})$ of $\phi$. To prove $\phi=I_{\mathcal{B}}$ on $\mathcal{B}$ it is enough to show that $V$ is a unitary. We claim  that $V(\mathcal{D})$ is invariant for  $\pi(\mathcal{B})$. Then by minimality $H'=[\pi(\mathcal{B})V(\mathcal{D})]\subseteq [V(\mathcal{D})] \subseteq H'$ will imply $V$ is a unitary. Now, to see the claim let us first show that $\pi(T)V(\mathcal{D})\subseteq V(\mathcal{D}) $. For that, we show that $\pi(T)V(H_{l})\subseteq V(H_{l})$ for every $l\in \Omega$.  Let $l\in \Omega$ and $g\in H'_{l}$,
\begin{align*}
   \Vert (I-VV^*)\pi(T)VV^*g \Vert ^2 &=\langle (I-VV^*)\pi(T)VV^*g,~ (I-VV^*)\pi(T)VV^*g\rangle \\
   &= \langle VV^*\pi(T)(I-VV^*)\pi(T)VV^*g,~ g   \rangle \\
   &= \langle VV^*\pi(T)\pi(T)VV^*g-VV^*\pi(T)VV^*\pi(T)VV^*g,~ g   \rangle \\
   &= \langle V\phi(T^2)V^*h'-V\phi(T)\phi(T)V^*g,~ g   \rangle \\
  & = \langle VT^2V^*g-VT^2V^*h',~ g   \rangle \\
  & = 0.
\end{align*}
Thus $(I-VV^*)\pi(T)VV^* =0$ on $H_{l}'$. Since $T$ is self adjoint,  $VV^*\pi(T)(I-VV^*) =0$ on $H_{l}'$. These two observations and the facts  $\pi(T)_{|_{H_{l}'}}\in B(H_{l}')$, $V_{|_{H_{l}}}$ is an isometry  and $\pi(T)V(H_{l})\subseteq H_{l}'$  will give $\pi(T)V(H_{l})\subseteq V(H_{l})$. As $l$ is arbitrary, it follows that $\pi(T)V(\mathcal{D})\subseteq V(\mathcal{D})$. 
To show $\pi(\mathcal{B})V(\mathcal{D})\subseteq V(\mathcal{D})$, let $T_{0}\in \mathcal{B}$ and $\mathcal{B}_{T}=span\{I,T,T^2,T^3,\cdots\}$. Then $T_{0}=\lim T_{\lambda}$, where $T_{\lambda}\in \mathcal{B}_{T}$. 
For $h\in H_{l}$,
\begin{align*}
    \Vert \pi(T_{\lambda})Vh - \pi(T_{0})Vh \Vert_{H'_{l}} &=  \Vert \pi(T_{\lambda} -T_{0})Vh \Vert_{H'_{l}} \\
    &\leq p_{\alpha}(T_{\lambda} -T_{0})\Vert h\Vert_{H_{l}},
\end{align*}
where $\alpha$ corresponds to $l$ in the local contractivity of $\pi$. As $\{T_{\lambda}\}$ converges to $T_{0}$, we have $p_{\alpha}(T_{\lambda} -T_{0})\rightarrow 0$ and hence $\{\pi(T_{\lambda})Vh\}$ converges to $\pi(T_{0})Vh$ in $H'_{l}$. Therefore, $$ \pi(T_{0})Vh \in [\pi(T_{\lambda})Vh] .$$
As $\pi(T)$ leaves $V(H_{l})$ invariant, so is every element of $\mathcal{B}_{T}$. Then using the fact that $VH_{l}$ is  a closed subspace (as $V$ is an isometry and $H_{l}$ is a closed subspace),
$$ \pi(T_{0})Vh \in [\pi(T_{\lambda})Vh] \subseteq [\pi(T_{\lambda})V(H_{l})]\subseteq  [V(H_{l})]=  V(H_{l}).$$
Therefore $\pi(\mathcal{B})V(H_{l})\subseteq  V(H_{l})$ for every $l$ and hence $\pi(\mathcal{B})V(\mathcal{D})\subseteq  V(\mathcal{D})$.
\end{example}

\begin{example}
Let $K$ be an infinite dimensional separable complex Hilbert space with  a complete orthonormal basis $\{e_{n}:n\in \mathbb{N}\}$. Consider $K_{n}=span\{e_{1},e_{2},\cdots e_{n}\}$  and $H_{n}=K\oplus K_{n}$. Then  $\mathcal{E}=\{H_{n}:n\in \mathbb{N}\}$  is  a quantized domain in the Hilbert space $H=K\oplus K$ with union space $\mathcal{D}=\cup\{H_{n}:n\in \mathbb{N}\}$. Define $V:H\rightarrow H$  to be the map $V_{0}\oplus 1_{K}$ where $V_{0}:K\rightarrow K$ be the unilateral right shift operator and $1_{K}$ be the identity operator on $K$. 
Note that $V$ is an isometry  but not a unitary. Also, $V(K\oplus K_{n})\subseteq K\oplus K_{n}$ and  $$V((K\oplus K_{n})^{\perp}) =V(0\oplus K_{n}^{\perp})=0\oplus K_{n}^{\perp}= (K\oplus K_{n})^{\perp}.$$ Therefore $V|_{\mathcal{D}}\in\ced{E}{D}$.

Consider the local operator system $S=span\{1_{\mathcal{D}}, V|_{\mathcal{D}}, V^*\}$ in $\ced{E}{D}$ and let $\mathcal{B}$  the locally $C^*$-algebra generated by $S$ in $\ced{E}{D}$. We claim that the inclusion map from $S$ to $\ced{E}{D}$ have two distinct local CP-extension to $\mathcal{B}$. Obviously the  inclusion  representation $I_{\mathcal{B}}:\mathcal{B}\rightarrow \ced{E}{D}$ is  a local CP-extension of the inclusion map on $S$.  Define $\psi:\mathcal{B}\rightarrow \ced{E}{D}$ by $\psi(a)=V^*I_{\mathcal{B}}(a)V|_{\mathcal{D}}$ for all $a\in \mathcal{B}$. Clearly $\psi$ is  a unital local completely positive map on $\mathcal{B}$.
For all scalars $c_{1},c_{2}$ and $c_{3}$ we have
\begin{align*}
\psi(c_{1}1_{\mathcal{D}}+c_{2} V|_{\mathcal{D}}+c_{3} V^*)&=V^*(c_{1}1_{\mathcal{D}}+c_{2} V|_{\mathcal{D}}+c_{3} V^*) V|_{\mathcal{D}}\\
&= c_{1}1_{\mathcal{D}}+c_{2} V|_{\mathcal{D}}+c_{3} V^*.
\end{align*}
Therefore $\psi|_{S}=I_{\mathcal{B}}|_{S}$. Now the element $V|_{\mathcal{D}}V^*\in \mathcal{B}$. But  $$\psi(V|_{\mathcal{D}}V^*)=V^*V|_{\mathcal{D}}V^*V|_{\mathcal{D}}=I_{\mathcal{D}}\neq V|_{\mathcal{D}}V^*.$$
That is $\psi \neq I_{\mathcal{B}}$ on $\mathcal{B}$. Therefore,  the irreducible representation $I_{\mathcal{B}}$ doesn't have local unique extension property for $S$. 
\end{example}

\begin{definition}\label{local boundary representation}
Let $S$ be a linear subspace of a local $C^*$-algebra $\mathcal{A}$ such that $S$ generates $\mathcal{A}$. An  irreducible representation $\pi : \mathcal{A}\rightarrow \mathcal{C}_{\mathcal{E}}^* (\mathcal{D})$   is called a \textit{local boundary representation for} $\mathcal{S}$ if $\pi$ has local unique extension property for $S$. 
\end{definition}

\begin{remark}
The Definition \ref{local unique extension property} and Definition \ref{local boundary representation} are meaningful for local operator systems in arbitrary locally $C^*$-algebras. But the Arveson's extension theorem in the context of locally $C^*$-algebras is available only for $\ced{E}{D}$  for quantized Frechet domain $\mathcal{E}$ and thus we restrict our studies to the context of  Frechet locally C$^*$-algebras.
\end{remark}

Now, we show that the local boundary representations are intrinsic invariants for local operator systems. Let  $\mathcal{A}_{1}$ be a  locally $C^*$-algebra and  $\mathcal{A}_{2}=C^*_{\mathcal{E}_{2}}(\mathcal{D}_{2})$  be the locally $C^*$-algebras of all non-commutative  continuous functions on a quantized Frechet domain  $\mathcal{E}_{2}$ with its union space $\mathcal{D}_{2}$.
\begin{theorem}
Let $S_{1}$ and $S_{2}$ be linear subspaces of $\mathcal{A}_{1}$ and $\mathcal{A}_{2}$ respectively. Let $\phi :S_{1}\rightarrow S_{2}$ be a unital surjective local completely isometric linear map. Then for every boundary representation $\pi_{1}$ of $\mathcal{A}_{1}$ there exists a boundary representation $\pi_{2}$ of $\mathcal{A}_{2}$ such that $\pi_{2}\circ\phi(a)=\pi_{1}(a)$ $\forall ~ a\in S_{1}$.  
\end{theorem}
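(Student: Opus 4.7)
My plan is to construct $\pi_2$ as a compression of a Stinespring dilation of the natural local CP lift $\pi_1 \circ \phi^{-1}$, identifying the correct invariant subspace by exploiting the local unique extension property of $\pi_1$. Write $\mathcal{C}_{\mathcal{E}_0}^*(\mathcal{D}_0)$ for the locally $C^*$-algebra in which $\pi_1:\mathcal{A}_1 \to \mathcal{C}_{\mathcal{E}_0}^*(\mathcal{D}_0)$ takes its values, with underlying Hilbert space $H_0$. The composition $\rho = \pi_1 \circ \phi^{-1} : S_2 \to \mathcal{C}_{\mathcal{E}_0}^*(\mathcal{D}_0)$ is a unital local complete isometry, hence a local CC map, so by Theorem \ref{local CP-extension of local CC-map} it extends to a local CP map $\Psi : \mathcal{A}_2 \to \mathcal{C}_{\mathcal{E}_0}^*(\mathcal{D}_0)$. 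Applying Stinespring (Theorem \ref{Stinespring's theorem}) yields a representation $\sigma : \mathcal{A}_2 \to \mathcal{C}_{\mathcal{E}^{\Psi}}^*(\mathcal{D}^{\Psi})$ together with an isometry $V : H_0 \to H^{\Psi}$ satisfying $V(H_{0,l}) \subseteq H^{\Psi}_l$ and $\Psi(a) \subseteq V^* \sigma(a) V$ for all $a \in \mathcal{A}_2$.

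The crucial step is to verify that $V(H_0)$ is a reducing subspace for $\sigma(\mathcal{A}_2)$. Form the unital local complete isometry $\sigma \circ \phi : S_1 \to \mathcal{C}_{\mathcal{E}^{\Psi}}^*(\mathcal{D}^{\Psi})$ and extend it via Theorem \ref{local CP-extension of local CC-map} to a local CP map $\tau : \mathcal{A}_1 \to \mathcal{C}_{\mathcal{E}^{\Psi}}^*(\mathcal{D}^{\Psi})$. For every $s \in S_1$,
\[
V^* \tau(s) V \;=\; V^* \sigma(\phi(s)) V \;=\; \Psi(\phi(s)) \;=\; \pi_1(s),
\]
so $V^*\tau(\cdot)V : \mathcal{A}_1 \to \mathcal{C}_{\mathcal{E}_0}^*(\mathcal{D}_0)$ is a local CP extension of $\pi_1|_{S_1}$. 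Because $\pi_1$ is a local boundary representation, its local unique extension property forces $V^*\tau(a)V = \pi_1(a)$ for every $a \in \mathcal{A}_1$. Combining this with a local analogue of the Kadison--Schwarz inequality for the $2$-positive map $\tau$ (obtained by feeding $\tau^{(2)}$ the manifestly positive $2\times 2$ matrix with entries $1,a,a^*,a^*a$) and the multiplicativity of $\pi_1$, one deduces $V^*\tau(a)^*(I - VV^*)\tau(a)V = 0$, so that $(I - VV^*)\tau(a)V = 0$, i.e.\ $\tau(a)VH_0 \subseteq VH_0$ for every $a \in \mathcal{A}_1$. Restricting to $S_1$ yields $\sigma(\phi(s))VH_0 \subseteq VH_0$, i.e.\ $\sigma(S_2)VH_0 \subseteq VH_0$; since $S_2$ is self-adjoint and $\sigma$ is multiplicative, the $*$-algebra generated by $S_2$ also preserves $VH_0$. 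Local continuity of $\sigma$, density of this $*$-algebra in $\mathcal{A}_2$, and closedness of $VH_0$ then upgrade the invariance to $\sigma(\mathcal{A}_2)VH_0 \subseteq VH_0$.

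Put $\pi_2(a) = V^*\sigma(a)V|_{\mathcal{D}_0}$: the reducing property lifts $\pi_2$ from a local CP map to a unital local contractive $*$-homomorphism from $\mathcal{A}_2$ into $\mathcal{C}_{\mathcal{E}_0}^*(\mathcal{D}_0)$ satisfying $\pi_2(\phi(s)) = \pi_1(s)$ for $s \in S_1$. For irreducibility of $\pi_2$, any $T \in \pi_2(\mathcal{A}_2)' \cap \mathcal{C}_{\mathcal{E}_0}^*(\mathcal{D}_0)$ commutes with $\{\pi_1(s) : s \in S_1\}$; since $S_1$ generates $\mathcal{A}_1$ and $\pi_1$ is local contractive, $T$ lies in $\pi_1(\mathcal{A}_1)' \cap \mathcal{C}_{\mathcal{E}_0}^*(\mathcal{D}_0) = \mathbb{C}\, I_{\mathcal{D}_0}$ by the irreducibility of $\pi_1$. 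For the local unique extension property of $\pi_2$, let $\Phi':\mathcal{A}_2 \to \mathcal{C}_{\mathcal{E}_0}^*(\mathcal{D}_0)$ be any local CP extension of $\pi_2|_{S_2}$; Stinespring-dilate $\Phi'$ to obtain an isometry $W$ and a representation $\theta$, and repeat the argument of the preceding paragraph verbatim (using that $\Phi'\circ\phi = \pi_1$ on $S_1$ and invoking the unique extension property of $\pi_1$ once more). This forces $W(H_0)$ to be reducing for $\theta(\mathcal{A}_2)$, so $\Phi'$ is itself a representation; two representations of $\mathcal{A}_2$ agreeing on the generating set $S_2$ coincide by density and local continuity, so $\Phi' = \pi_2$. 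The main obstacle is the invariance argument: establishing the local Kadison--Schwarz inequality seminorm-by-seminorm and bootstrapping $VH_0$-invariance from $S_2$ to all of $\mathcal{A}_2$; a subordinate technical point is verifying that $VH_0$ with its inherited filtration is compatible with the quantized domain $\{H_{0,l}\}$ so that $\pi_2$ genuinely lands in $\mathcal{C}_{\mathcal{E}_0}^*(\mathcal{D}_0)$.
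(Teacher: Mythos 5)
Your overall strategy coincides with the paper's: extend $\pi_{1}\circ\phi^{-1}$ from $S_{2}$ to a local CP map $\Psi$ on $\mathcal{A}_{2}$ via Theorem \ref{local CP-extension of local CC-map}, use the local unique extension property of $\pi_{1}$ together with a Schwarz-type inequality to force $\Psi$ to be multiplicative, and deduce irreducibility of $\pi_{2}$ from a commutant containment. The only methodological difference is that you prove multiplicativity upstairs, by dilating $\Psi$ and showing $VH_{0}$ is invariant for $\sigma(\mathcal{A}_{2})$, whereas the paper stays downstairs and invokes the multiplicative-domain result \cite[Corollary 5.5]{Dosi08} after checking $\pi_{2}(x^{*}x)=\pi_{2}(x)^{*}\pi_{2}(x)$; your identity $V^{*}\tau(a)^{*}(I-VV^{*})\tau(a)V=0$ is precisely the equality case of that Schwarz inequality, so the two computations are the same one in different clothing.

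There is, however, a genuine gap in your bootstrap. The theorem assumes only that $S_{1},S_{2}$ are linear subspaces, so $S_{2}$ need not be self-adjoint, yet your passage from $\sigma(S_{2})VH_{0}\subseteq VH_{0}$ to invariance under the $*$-algebra generated by $S_{2}$ explicitly uses \emph{since $S_{2}$ is self-adjoint}. Invariance of $VH_{0}$ under $\sigma(y)$ for $y\in S_{2}$ says nothing about $\sigma(y^{*})$, and without invariance under a \emph{self-adjoint} generating set you cannot conclude that $VH_{0}$ reduces $\sigma(\mathcal{A}_{2})$, hence cannot conclude that $\Psi$ is a homomorphism. The same defect recurs in your irreducibility step: commutation of $T$ with $\pi_{1}(S_{1})$ does not yield commutation with $\pi_{1}(S_{1}^{*})$, so ``$S_{1}$ generates $\mathcal{A}_{1}$'' is not by itself enough to place $T$ in $\pi_{1}(\mathcal{A}_{1})'$. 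The paper's remedy --- which your argument is one choice of $\tau$ away from --- is to first extend $\phi$ itself to a local CP map $\tilde{\phi}:\mathcal{A}_{1}\rightarrow\mathcal{A}_{2}$ by Theorem \ref{local CP-extension of local CC-map}. Then $\tilde{\phi}(\mathcal{A}_{1})$ is a self-adjoint subset of $\mathcal{A}_{2}$ (local positive maps are self-adjoint, \cite[Lemma 4.3]{Dosi08}) that contains $S_{2}$ and hence generates $\mathcal{A}_{2}$. Taking $\tau=\sigma\circ\tilde{\phi}$ in your argument, the relation $(I-VV^{*})\tau(a)V=0$ for all $a\in\mathcal{A}_{1}$ becomes invariance of $VH_{0}$ under $\sigma(\tilde{\phi}(\mathcal{A}_{1}))$, a self-adjoint generating set, and the bootstrap goes through; likewise $\pi_{1}(\mathcal{A}_{1})=\pi_{2}(\tilde{\phi}(\mathcal{A}_{1}))\subseteq\pi_{2}(\mathcal{A}_{2})$ gives $\pi_{2}(\mathcal{A}_{2})'\subseteq\pi_{1}(\mathcal{A}_{1})'$ directly. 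With an arbitrary extension $\tau$, as you have it, the invariance you obtain for $a\notin S_{1}$ concerns the operator $\tau(a)$, which need not lie in $\sigma(\mathcal{A}_{2})$ and so tells you nothing further about $\sigma$. The same correction must be made in your final verification of the local unique extension property of $\pi_{2}$.
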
 
\begin{proof}
By Theorem  \ref{local CP-extension of local CC-map} we can extend $\phi$ to a local CP-map $\tilde{\phi}:\mathcal{A}_{1}\rightarrow \mathcal{A}_{2}$. Consider the map $\psi : S_{2}\rightarrow \ced{E}{D}$ given by $(\psi\circ\phi)(a)=\pi_{1}(a)$. Clearly $\psi$ is  a unital local CC-map. Again by  Theorem \ref{local CP-extension of local CC-map} there exists a local CP-extension of $\psi$, say $\pi_{2}$, where $\pi_{2}:\mathcal{A}_{2}\rightarrow \ced{E}{D}$ such that $(\pi_{2}\circ\phi)(a)=\pi_{1}(a)$ for every $a\in S_{1}$. Since $\pi_{1}$ is  a boundary representation, $(\pi_{2}\circ\phi)(a)=\pi_{1}(a)$ for every $a\in \mathcal{A}_{1}$.  Note that the locally $C^*$-algebra generated by $\tilde{\phi}(\mathcal{A}_{1})$ is equal to $\mathcal{A}_{2}$ and $\pi_{2}$ is continuous for the respective topologies. Thus, to prove $\pi_{2}$ is an algebra homomorphism it's enough to prove that $\pi_{2}(xy)=\pi_{2}(x)\pi_{2}(y)$ for every $x\in \tilde{\phi}(\mathcal{A}_{1})$ and for all $y\in \mathcal{A}_{2}$. But in view of \cite[Corollary 5.5]{Dosi08}, it's enough to prove that 
$$\pi_{2}(x)^*\pi_{2}(x)=\pi_{2}(x^*x)~~\forall x\in \tilde{\phi}(\mathcal{A}_{1}).$$
Let $a\in\mathcal{A}_{1}$. Then using the fact that a local positive  map is positive \cite[Proposition 2.1]{Joita21positivity}, and   \cite[Corollary 5.5]{Dosi08} we have, on $\mathcal{D}$,
\begin{align*}
\pi_{2}(\tilde{\phi}(a))^*\pi_{2}(\tilde{\phi}(a)) &\leq \pi_{2}(\tilde{\phi}(a)^*\tilde{\phi}(a)) = \pi_{2}(\tilde{\phi}(a^*)\tilde{\phi}(a)) \\
 &\leq \pi_{2}(\tilde{\phi}(a^*a)) \\
 &= \pi_{1}(a^*a) \\
 &= \pi_{1}(a^*) \pi_{1}(a) \\
 &= \pi_{2}(\tilde{\phi}(a))^* \pi_{2}(\tilde{\phi}(a)).
\end{align*}
Therefore  $\pi_{2}(\tilde{\phi}(a)^*\tilde{\phi}(a))=\pi_{2}(\tilde{\phi}(a))^* \pi_{2}(\tilde{\phi}(a))$  on $\mathcal{D}$. Thus $\pi_{2}$ is  a representation of $\mathcal{A}_{2}$. In fact we proved that any local CP-extension of $\psi=\pi_{2}{|_{S_{2}}}$ to $\mathcal{A}_{2}$ is  multiplicative on $\mathcal{A}_{2}$. Equivalently, $\pi_{2}$ has local unique extension property for $S_{2}$. 

Now, note that $\pi_{1}(\mathcal{A}_{1})\subseteq (\pi_{2}\circ\tilde{\phi})(\mathcal{A}_{1}) \subseteq \pi_{2}(\mathcal{A}_{2})$. Thus, for commutants we have  $\pi_{2}(\mathcal{A}_{2})' \subseteq \pi_{1}(\mathcal{A}_{1})'$. Then the irreducibility of  $\pi_{2}$ follows from the irreducibility of $\pi_{1}$. This completes the proof.
\end{proof}
\begin{corollary}\label{invariance of boundary representation}
Let $S_{1}$ and $S_{2}$ be local operator systems of $\mathcal{A}_{1}$ and $\mathcal{A}_{2}$ respectively. Let $\phi :S_{1}\rightarrow S_{2}$ be a unital invertible local CP-map  such that $\phi^{-1}$ is also a local CP-map. Then for every boundary representation $\pi_{1}$ of $\mathcal{A}_{1}$ there exists a boundary representation $\pi_{2}$ of $\mathcal{A}_{2}$ such that $\pi_{2}\circ\phi(a)=\pi_{1}(a)$ $\forall ~ a\in S_{1}$. 
\end{corollary}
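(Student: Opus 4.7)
The strategy is to deduce this corollary as a direct consequence of the preceding theorem. The task reduces to showing that the hypotheses on $\phi$ force it to be a unital surjective local completely isometric linear map, after which the previous theorem immediately supplies the desired boundary representation $\pi_2$ satisfying $\pi_2 \circ \phi(a) = \pi_1(a)$ for every $a \in S_1$.

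To carry this out, I would first observe that $\phi$ is surjective (being invertible) and that $\phi^{-1}$ is automatically unital, since $\phi(1_{S_1}) = 1_{S_2}$ forces $\phi^{-1}(1_{S_2}) = 1_{S_1}$. Next, I would apply Theorem \ref{local CC iff local CP} twice: once to $\phi$, a unital local CP-map, to conclude that $\phi$ is a local CC-map from $S_1$ into $\mathcal{A}_2$; and once to $\phi^{-1}$, also a unital local CP-map, to conclude that $\phi^{-1}$ is a local CC-map from $S_2$ into $\mathcal{A}_1$. Together these two estimates express precisely the local complete isometry of $\phi$, so the preceding theorem applies verbatim and produces the boundary representation $\pi_2$ with the desired intertwining property.

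The only delicate point is making sure that the proof of the preceding theorem really needs only the two local CC estimates just established, rather than some stronger equality of seminorms. Inspecting that proof, the isometric hypothesis is invoked exactly twice: once when Theorem \ref{local CP-extension of local CC-map} is applied to the local CC-map $\phi$ to produce the extension $\tilde{\phi} : \mathcal{A}_1 \to \mathcal{A}_2$, and once when the same theorem is applied to $\psi := \pi_1 \circ \phi^{-1}$, which is local CC as the composition of the local CC-map $\phi^{-1}$ with the local contractive $*$-homomorphism $\pi_1$, to produce $\pi_2$. Hence the two invocations of Theorem \ref{local CC iff local CP} furnish exactly what is needed, and no further argument is required beyond citing the preceding theorem.
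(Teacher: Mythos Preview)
Your approach is exactly the one the paper intends: the corollary is stated without proof, and the evident deduction is to convert the two-sided local CP hypothesis into the local complete isometry hypothesis of the preceding theorem via Theorem~\ref{local CC iff local CP}, then invoke that theorem.

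One small technical point deserves attention. Theorem~\ref{local CC iff local CP} is stated only for maps whose codomain is $\ced{E}{D}$ for some quantized domain $\mathcal{E}$. This is fine for $\phi$, since $\mathcal{A}_2 = C^*_{\mathcal{E}_2}(\mathcal{D}_2)$ by assumption, but $\mathcal{A}_1$ is merely ``a locally $C^*$-algebra'' in the standing hypotheses, so applying the theorem directly to $\phi^{-1}:S_2\to\mathcal{A}_1$ is not literally justified by the statement as written. The cleanest repair is the one you nearly carry out in your last paragraph: rather than first making $\phi^{-1}$ local CC and then composing, observe directly that $\psi=\pi_1\circ\phi^{-1}:S_2\to\ced{E}{D}$ is a unital local CP-map (as a composition of local CP-maps), and apply Theorem~\ref{local CC iff local CP} to $\psi$ itself. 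This yields exactly the local CC property needed to feed $\psi$ into Theorem~\ref{local CP-extension of local CC-map}, and the rest of the preceding theorem's proof goes through unchanged.
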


\begin{remark}
We expect the above theorem and consequently the corollary to be true for any Frechet locally $C^*$-algebras  in place of $\mathcal{A}_{2}=C^*_{\mathcal{E}_{2}}(\mathcal{D}_{2})$. 
\end{remark}

\subsection{Characterisation of boundary representations}
The following theorem shows that the restriction of  a local boundary representation to the local operator system is a pure map. 

\begin{theorem}\label{restriction pure}
Let $S$ be a local operator system in a Frechet local $C^*$-algebra $\mathcal{A}$ such that $S$ generates $\mathcal{A}$.  Let $\mathcal{E}$ be a quantized Frechet domain with its union space $\mathcal{D}$, and $\pi :\mathcal{A}\rightarrow \ced{E}{D}$  be a boundary representation for $S$. Then $\pi |_{S}$ is a pure map on $S$.
\end{theorem}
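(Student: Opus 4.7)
The plan is to adapt the classical Arveson boundary representation argument to the locally $C^*$ setting, combining the Arveson--Dosiev extension theorem with the Radon--Nikodym theorem for local CP-maps. Suppose $\psi\in\cpcc{S}{E}{D}$ is such that $\pi|_S-\psi\in\cpcc{S}{E}{D}$; the goal is to produce a scalar $t\in[0,1]$ with $\psi=t\,\pi|_S$.

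First I would apply Theorem~\ref{local CP-extension of local CC-map} separately to the local CC-maps $\psi$ and $\pi|_S-\psi$ on $S$ to obtain local CP-extensions $\widetilde{\psi},\widetilde{\chi}:\mathcal{A}\to\ced{E}{D}$. Their sum $\widetilde{\psi}+\widetilde{\chi}$ is then a local CP-map on $\mathcal{A}$ that restricts to $\pi|_S$, so the local unique extension property of the boundary representation $\pi$ (Definition~\ref{local unique extension property}) forces $\widetilde{\psi}+\widetilde{\chi}=\pi$. In particular $\pi-\widetilde{\psi}=\widetilde{\chi}$ is local CP, giving the domination $\widetilde{\psi}\leq\pi$ in the local CP-order. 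Next, the triple $(\pi,I_\mathcal{D},\{H;\mathcal{E};\mathcal{D}\})$ is trivially a Stinespring representation of $\pi$, and by Theorem~\ref{irreducible imply minimal} it is minimal since $\pi$ is irreducible. Applying the Radon--Nikodym theorem \cite[Theorem 4.5]{Bhat21} to $\widetilde{\psi}\leq\pi$ through this minimal Stinespring, I obtain a unique $T\in\pi(\mathcal{A})'\cap\ced{E}{D}$ with $0\leq T\leq I$ such that $\widetilde{\psi}(a)\subseteq T\pi(a)$ on $\mathcal{D}$ for every $a\in\mathcal{A}$. The irreducibility condition $\pi(\mathcal{A})'\cap\ced{E}{D}=\mathbb{C}I_\mathcal{D}$ forces $T=tI_\mathcal{D}$ for some $t\in[0,1]$, and restricting $\widetilde{\psi}=t\pi$ back to $S$ gives $\psi=t\,\pi|_S$.

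The main obstacle I anticipate is the initial extension step: Theorem~\ref{local CP-extension of local CC-map} is formulated for \emph{unital} local CC-maps, whereas neither $\psi$ nor $\pi|_S-\psi$ need be unital. I would circumvent this either by appealing to a non-unital version of Dosiev's extension theorem \cite[Theorem 8.2]{Dosi08} (the classical Arveson extension imposes no uniticity assumption), or by dispatching the degenerate cases $\psi(1_\mathcal{A})=0$ and $\psi(1_\mathcal{A})=I_\mathcal{D}$ directly (they force $\psi=0$ and $\psi=\pi|_S$, respectively, via local Schwarz-type inequalities) and, in the intermediate regime, renormalizing $\psi$ by a suitable positive scalar to reduce to the unital case actually covered by Theorem~\ref{local CP-extension of local CC-map}.
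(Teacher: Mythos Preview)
Your argument is essentially the paper's own: extend the two summands to $\mathcal{A}$, use the local unique extension property to force their sum to equal $\pi$, then exploit irreducibility to get a scalar. The only cosmetic difference is that the paper packages your final Radon--Nikodym step into a single appeal to Theorem~\ref{pure iff irreducible} (irreducible $\Rightarrow$ pure on $\mathcal{A}$), and for the extension step it invokes \cite[Theorem~8.2]{Dosi08} directly rather than Theorem~\ref{local CP-extension of local CC-map}, tacitly treating it as applicable in the non-unital case, just as the classical Arveson extension theorem is.

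One caution about your proposed workaround (b): the operator $\psi(1_{\mathcal{A}})$ is a local positive element with $0\le \psi(1_{\mathcal{A}})\le I_{\mathcal{D}}$, but there is no reason a~priori for it to be a scalar multiple of $I_{\mathcal{D}}$, so a scalar renormalization cannot in general make $\psi$ unital. Thus your option (a)---relying on the non-unital form of the Dosiev--Arveson extension---is the route you must take, and indeed that is what the paper does implicitly.
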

\begin{proof}
Let  $\pi_{1},\pi_{2}\in\cpcc{S}{E}{D}$ such that $\pi|_{S}=\pi_{1}+\pi_{2}$. Then by Dosiev-Arveson extension theorem \cite[Theorem 8.2]{Dosi08}, each $\pi_{i}$ extends to  a local CPCC map on $\mathcal{A}$, call it $\tilde{\pi}_{i}$, $i=1,2$. We will show that  $\tilde{\pi}_{1}+\tilde{\pi}_{2}\in \cpcc{\mathcal{A}}{E}{D}$. For that, fix $l\in \mathbb{N}$. Then there exists $\alpha_{i}$ and $\beta_{i}$ such that $\tilde{\pi}_{i}(a)\geq_{l}0$ whenever $a\geq_{\alpha_{i}}0$ in $\mathcal{A}$ and $\Vert \tilde{\pi}_{i}(b) \Vert_{l}\leq p_{\beta_{i}}(b)$ for every $b\in \mathcal{A}$. Take $\alpha =\max\{\alpha_{1},\alpha_{2}\}$ and $\beta =\max\{\beta_{1},\beta_{2}\}$. Using the fact that the family of semi-norms $\{p_{n}\}_{n\in\mathbb{N}}$ is an  upward filtered family, we have $\tilde{\pi}_{i}(a)\geq_{l}0$ whenever $a\geq_{\alpha} 0$ in $\mathcal{A}$ and $\Vert \tilde{\pi}_{i}(b) \Vert_{l}\leq p_{\beta}(b)$ for every $b\in \mathcal{A}$. Therefore, $\tilde{\pi}_{1}+\tilde{\pi}_{2}\in \cpcc{\mathcal{A}}{E}{D}$. 

Now, since $\tilde{\pi}_{1}+\tilde{\pi}_{2}|_{S}=\pi_{1}+\pi_{2}=\pi|_{S}$ and $\pi$ is  a boundary representation for $S$, we must have $\pi(a)=\tilde{\pi}_{1}(a)+\tilde{\pi}_{2}(a)$ for every $a\in \mathcal{A}$. The irreducibility of $\pi$ and  the  Theorem \ref{pure iff irreducible} implies that $\pi$ is a pure map. Thus, for each $i$,  there exist $t_{i}\in [0,1]$ such that $\tilde{\pi}_{i}(a)=t_{i}\pi(a)$ for every $a\in \mathcal{A}$. It follows that    $\pi_{i}=t_{i}\pi|_{S}$. Hence $\pi|_{S}$ is a pure map on $S$.
\end{proof}

Now, we show that certain irreducible representations  of $\mathcal{A}$ that are pure CPCC-maps on $S$ are local boundary representations. For this, we need to introduce a couple of new notions. Let $S$ be a local operator system in  a  local $C^*$-algebra $\mathcal{A}$ such that $\mathcal{A}$ is generated by $S$, and let $\pi:\mathcal{A}\rightarrow \ced{E}{D}$ be a  representation of $\mathcal{A}$. We say that $\pi$ is  a \textit{finite representation for } $S$ if  for every isometry $V\in B(H)$ with $V(H_{l})\subseteq H_{l} $ for every $l\in \Lambda$, the condition $\pi(x)\subseteq V^*\pi(x)V$ for every $x\in S$ implies $V$ is  a unitary. We say that the  local operator system $S$  \textit{separates} the irreducible representation $\pi$ if for any  irreducible representation  $\rho$ of $\mathcal{A}$ on some quantized domain $\mathcal{E}'$ with its union space $\mathcal{D}'=\bigcup\limits_{l\in \Lambda} H_{l}'$ and an isometry $V$ in $B(H,H')$ that satisfies $V(\mathcal{H}_{l})\subseteq \mathcal{H}_{l}'$  for every $l\in \Lambda$ such that  $\pi(x)\subseteq V^*\rho(x)V$ for all $x\in S$ implies that $\pi$ and $\rho$ are unitarily equivalent representations of $\mathcal{A}$.

\begin{theorem}
Let $S$ be a local operator system in  a  local $C^*$-algebra $\mathcal{A}$ such that $\mathcal{A}$ is generated by $S$. Then, an irreducible representation $\pi:\mathcal{A}\rightarrow \ced{E}{D}$   is a local boundary representations for $S$  if and only if the following conditions hold;
\begin{enumerate}[(i)] 
    \item $\pi|_{S}$ is a pure map on $S$
    \item Every local CP-extension of $\pi|_{S}$ to $\mathcal{A}$ is a linear extreme 
    point of \\ $\cpcc{\mathcal{A}}{E}{D}$
    \item $\pi$ is  a finite representation for $S$
    \item $S$ separates $\pi$.
\end{enumerate}
\end{theorem}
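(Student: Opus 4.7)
I would split the biconditional and deploy the toolkit already built: Theorem~\ref{restriction pure}, Proposition~\ref{linear extreme and pure}, Theorem~\ref{pure iff irreducible}, and the Dosiev--Arveson extension theorem, together with a Kadison--Schwarz argument to propagate intertwinings from $S$ to all of $\mathcal{A}$.

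For the forward direction I would take $\pi$ to be a local boundary representation and verify the four conditions in order. Condition (i) is exactly Theorem~\ref{restriction pure}. For (ii), the unique extension property reduces the task to showing that $\pi$ itself is a linear extreme point of $\cpcc{\mathcal{A}}{E}{D}$: if $\pi = t\phi_1 + (1-t)\phi_2$ with $t \in (0,1)$ and $\phi_i \in \cpcc{\mathcal{A}}{E}{D}$, I restrict to $S$, use purity from (i) to write $\phi_i|_S = c_i\,\pi|_S$ with $c_i \in [0,1]$, then apply the unique extension property to the scaled restriction to upgrade this to $\phi_i = c_i\pi$; the convex constraint $tc_1 + (1-t)c_2 = 1$ then forces $c_1 = c_2 = 1$. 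For (iii) and (iv), given the isometry $V$ supplied by the hypothesis, I would apply the identity $\pi(x^*x) = V^*\pi(x^*x)V$ together with $\pi(x^*x) = \pi(x)^*\pi(x)$ and the Schwarz inequality $V^*\pi(x^*x)V \geq V^*\pi(x)^*V V^*\pi(x)V$, forcing equality and hence $(I - VV^*)^{1/2}\pi(x)V = 0$ on $\mathcal{D}$. This yields the intertwining $V\pi(x) = \pi(x)V$ on $\mathcal{D}$ for every $x \in S$, which propagates to all of $\mathcal{A}$ by multiplicativity of $\pi$ and by local contractivity plus density of the $*$-algebra generated by $S$. Unitarity of $V$ in (iii), and the unitary equivalence of $\pi$ with $\rho$ in (iv), follow by inspecting the $\pi(\mathcal{A})$-invariance of $V(H_l)$ inside $H_l$ and invoking irreducibility.

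For the reverse direction I would assume (i)--(iv), take an arbitrary local CP extension $\phi : \mathcal{A} \to \ced{E}{D}$ of $\pi|_S$, and prove $\phi = \pi$. Because $\phi$ is extreme by (ii) and $\phi|_S = \pi|_S$ is pure by (i), Proposition~\ref{linear extreme and pure} makes $\phi$ itself pure; Theorem~\ref{pure iff irreducible} then produces a minimal Stinespring representation $(\rho, V, \{H'; \mathcal{E}'; \mathcal{D}'\})$ with $\rho$ irreducible and $V$ an isometry satisfying $V(H_l) \subseteq H_l'$. The identity $\pi(x) \subseteq V^*\rho(x)V$ on $S$ together with the separation hypothesis (iv) delivers a unitary $U : H' \to H$ with $U(H_l') \subseteq H_l$ and $\rho(a) = U^*\pi(a)U$ on $\mathcal{D}'$. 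The composition $W = UV$ is then an isometry of $H$ with $W(H_l) \subseteq H_l$ and $\phi(a) = W^*\pi(a)W$; specializing to $S$ gives $\pi(x) = W^*\pi(x)W$, so condition (iii) forces $W$ to be unitary. A final application of the Schwarz argument yields $W\pi(a) = \pi(a)W$ on $\mathcal{D}$ for all $a$, whence $\phi(a) = W^*W\pi(a) = \pi(a)$.

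The main obstacle sits inside the forward derivation of (iii) (and the parallel step in (iv)): passing from the bare intertwining $V\pi(a) = \pi(a)V$ on $\mathcal{D}$ to unitarity of $V$ requires one to recognise the range projection $VV^*$ as an element of $\ced{E}{D}$, so that the irreducibility condition $\pi(\mathcal{A})' \cap \ced{E}{D} = \mathbb{C}I_{\mathcal{D}}$ can be applied. The hypothesis $V(H_l) \subseteq H_l$ supplies invariance on the $H_l$ side for free, but the complementary requirement $V(H_l^\perp \cap \mathcal{D}) \subseteq H_l^\perp \cap \mathcal{D}$ has to be extracted by exploiting the fact that $V(H_l)$ is a closed $\pi(\mathcal{A})$-invariant subspace of $H_l$ and hence, by irreducibility, coincides with $H_l$ itself. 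Upgrading $V(H_l) \subseteq H_l$ to $V(H_l) = H_l$ for every $l$ is the pivotal quantitative content of the argument, and the analogous upgrade of $V(H_l) \subseteq H_l'$ to equality is what powers (iv).
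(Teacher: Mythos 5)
Your architecture coincides with the paper's: (i) via Theorem \ref{restriction pure}; (ii) by restricting a decomposition to $S$, invoking purity, and re-extending; (iii)/(iv) via a Schwarz-type computation and irreducibility; and the converse via condition (ii), Proposition \ref{linear extreme and pure}, Theorem \ref{pure iff irreducible}, then (iv) and (iii) in that order. However, two steps in your forward direction have genuine gaps. First, in deriving (iii) and (iv) you apply the identity $\pi(x^*x)=V^*\pi(x^*x)V$ as if it came with the hypothesis, but the hypotheses of finiteness and separation only supply $\pi(x)\subseteq V^*\pi(x)V$ (resp.\ $V^*\rho(x)V$) for $x\in S$, and $x^*x$ need not belong to the operator system $S$. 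The step that makes the computation legitimate---and the only place the boundary hypothesis actually enters in proving (iii) and (iv)---is the observation that $a\mapsto V^*\pi(a)V|_{\mathcal{D}}$ is a unital local CP-extension of $\pi|_{S}$, so the local unique extension property forces $\pi(a)=V^*\pi(a)V|_{\mathcal{D}}$ for \emph{every} $a\in\mathcal{A}$, in particular for $a=x^*x$. Your plan never invokes this, and without it the Schwarz argument cannot start.

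Second, your proposed mechanism for upgrading $V(H_{l})\subseteq H_{l}$ to $V(H_{l})=H_{l}$---``a closed $\pi(\mathcal{A})$-invariant subspace of $H_{l}$ must equal $H_{l}$ by irreducibility''---is not licensed by Definition \ref{irreducible}. Irreducibility there means $\pi(\mathcal{A})'\cap\ced{E}{D}=\mathbb{C}I_{\mathcal{D}}$, and the orthogonal projection onto a proper invariant subspace of a single $H_{l}$ need not lie in $\ced{E}{D}$ (it will generally fail to preserve the other $H_{k}$ and $H_{k}^{\perp}\cap\mathcal{D}$), so no contradiction with trivial commutant arises. The paper instead shows $V\in\pi(\mathcal{A})'\cap\ced{E}{D}$ directly: the containment $V(H_{l}^{\perp}\cap\mathcal{D})\subseteq H_{l}^{\perp}\cap\mathcal{D}$ is obtained from minimality of $(\pi,V,\{H;\mathcal{E};\mathcal{D}\})$, which follows from Theorem \ref{irreducible imply minimal} together with \cite[Lemma 4.2]{Bhat21}, and then $V=\lambda I_{H}$ gives unitarity. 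If you prefer your surjectivity-on-each-$H_{l}$ route, the correct justification is again minimality: once the intertwining is known, $H_{l}=[\pi(\mathcal{A})VH_{l}]\subseteq V(H_{l})$, the latter being closed because $V$ is an isometry. The remainder of your proposal, in particular the entire converse direction, matches the paper's argument.
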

\begin{proof}
Let $\pi$ be an irreducible representation of $\mathcal{A}$. Assume that $\pi$ is a local boundary representation for $S$. Then the statement $(i)$ follows by  Theorem \ref{restriction pure}. 

 $(ii)$: Since $\pi$ is  a local boundary representation, there is only one local CP-extension  of $\pi|_{S}$ to $\mathcal{A}$, namely $\pi$ itself. Let $\phi_{1},\phi_{2}\in \cpcc{\mathcal{A}}{E}{D}$ such that $\pi=\phi_{1}+\phi_{2}$. Then 
$\pi |_{S}=\phi_{1}|_{S}+\phi_{2}|_{S}$.  But $\pi|_{S}$ is pure by statement $(i)$. Thus $\phi_{1}|_{S}=t\pi|_{S}$ and $\phi_{2}|_{S}=(1-t)\pi|_{S}$ for some $t\in [0,1]$. If $0<t<1$, then $\pi|_{S}=\frac{1}{t}\phi_{1}|_{S}$ and $\pi|_{S}=\frac{1}{1-t}\phi_{2}|_{S}$. Now the maps $\frac{1}{t}\phi_{1}$ and $\frac{1}{1-t}\phi_{2}$ on $\mathcal{A}$ are unital local CP-extensions of $\pi|_{S}$. But $\pi$ is a boundary representation for $S$ would imply that   $\pi=\frac{1}{t}\phi_{1}$ and $\pi=\frac{1}{1-t}\phi_{2}$ on $\mathcal{A}$.
That is, $\pi$ is  a linear extreme point of $\cpcc{\mathcal{A}}{E}{D}$.

$(iii)$: Consider an isometry $V$ on $H$ such that $\pi(x)\subseteq V^*\pi(x)V$ for every $x\in S$ and $V(H_{l})\subseteq H_{l} $ for every $l\in \Lambda$. Then $\phi(a):=V^*\pi(a)V|_{\mathcal{D}}$ for all $a\in \mathcal{A}$ is a unital local CP-extension of $\pi|_{S}$. As $\pi$ is a local boundary representation we must have $\pi(a)= V^*\pi(a)V|_{\mathcal{D}}$ for all $a\in \mathcal{A}$. We claim that $V\in\pi(\mathcal{A})'\cap\ced{E}{D}$.  Clearly $V$ is bounded and $V(H_{l})\subseteq H_{l}$ $\forall$ $l$.  Let $x\in H_{l}^{\perp}\cap\mathcal{D}$. Since $\pi$ is irreducible, by Theorem \ref{irreducible imply minimal} $(\pi,V,\{H,\mathcal{E},\mathcal{D}\})$ is a minimal Stinespring for $\pi$. Then by \cite[Lemma 4.2]{Bhat21}, $Vx=\pi(1)Vx\in H_{l}^{\perp}$. It follows that $Vx\in H_{l}^{\perp}\cap\mathcal{D}$  as $V(H_{l})\subseteq H_{l}$. Thus $V(H_{l}^{\perp}\cap\mathcal{D})\subseteq H_{l}^{\perp}\cap\mathcal{D}$ and hence  $V\in \ced{E}{D}$. To see $V\in \pi(\mathcal{A})'$; first note that $dom(V\pi(a))=\mathcal{D}\subseteq dom(\pi(a)V)$ for all $a\in \mathcal{A}$. Let $h\in \mathcal{D}$ and $a\in \mathcal{A}$.  

$\Vert V\pi(a)h -\pi(a)Vh \Vert^2 $
\begin{equation*}
 \begin{split}  
 =& \langle  V\pi(a)h -\pi(a)Vh ,  V\pi(a)h -\pi(a)Vh \rangle \\
    =& \Vert V\pi(a)h \Vert^2 - \langle \pi(a)Vh ,  V\pi(a)h \rangle - \langle V\pi(a)h ,  \pi(a)Vh  \rangle + \Vert \pi(a)Vh \Vert ^2 \\
    =& \Vert \pi(a)h \Vert^2 - \langle V^*\pi(a)Vh ,\pi(a)h \rangle - \langle \pi(a)h, V^*\pi(a)Vh \rangle + \Vert  \pi(a)Vh\Vert^2 \\
    =& \Vert \pi(a)h \Vert ^2 - \langle \pi(a)h ,  \pi(a)h \rangle  - \langle \pi(a)h ,  \pi(a)h \rangle + \Vert \pi(a)Vh \Vert ^2 \\
    =&  \Vert \pi(a)Vh \Vert ^2  -\Vert \pi(a)h  \Vert ^2 =\langle \pi(a)Vh , \pi(a)Vh \rangle - \langle \pi(a)h , \pi(a)h \rangle \\
    =& \langle  V^*\pi(a^*)\pi(a)Vh , h \rangle - \langle \pi(a)^*\pi(a)h , h \rangle \\
    =& \langle  \pi(a^*a)h , h \rangle - \langle \pi(a^*a)h , h \rangle =0.
\end{split}
\end{equation*}
Therefore $V\pi(a)\subseteq \pi(a)V$ for every $a\in \mathcal{A}$ and hence $V\in \pi(\mathcal{A})'\cap\ced{E}{D}$. By the irreducibility of $\pi$ implies  $V=\lambda I_{H}$, $\lambda\in \mathbb{C}$. Thus, the isometry $V$ is a unitary. Hence $\pi$ is  a finite representation for $S$.

 $(iv)$: Assume that $\rho$ is an irreducible representation of $\mathcal{A}$  on some quantized domain $\mathcal{E}'$ with its union space $\mathcal{D}'=\bigcup\limits_{l\in \Lambda} H_{l}'$ and an isometry $V$ in $B(H,H')$ that satisfies $V(\mathcal{H}_{l})\subseteq \mathcal{H}_{l}'$  for every $l\in \Lambda$ such that  $\pi(x)\subseteq V^*\rho(x)V$ for all $x\in S$. As $\pi$ is  a local boundary representation for $S$, it follows that $\pi(a)\subseteq V^*\rho(a)V$ for all $a\in \mathcal{A}$ . 
Here $\pi$ and $\rho$ are irreducible representations of $\mathcal{A}$. By Theorem \ref{irreducible imply minimal} the Stinespring representations $(\pi, I_{H}, \{H,\mathcal{E},\mathcal{D}\})$ and $(\rho, V, \{H',\mathcal{E}',\mathcal{D}'\})$ are minimal  for $\pi$. Then  \cite[Theorem 3.4]{Bhat21} will imply that $\pi$ and $\rho$ are unitarily equivalent.
Hence  $S$ separate $\pi$.

Conversely assume that the irreducible representation $\pi$ satisfies all the four conditions.  Let $\phi:\mathcal{A}\rightarrow \ced{E}{D}$  be a local CP-map such that $\phi(a)=\pi(a)$  for every  $a\in S$. By condition $(ii)$, $\phi$ is  a linear extreme point of $\cpcc{\mathcal{A}}{E}{D}$. Then statement $(i)$  and Proposition \ref{linear extreme and pure} will imply that $\phi$ is a pure map in $\cpcc{\mathcal{A}}{E}{D}$. If $\{\omega; V;\{K,\mathcal{F},\mathcal{O}\}\}$ is a minimal Stinespring representation for $\phi$,   then by Theorem  \ref{pure iff irreducible} $\omega$ is irreducible. Also,   $$\pi(a)=\phi(a)=V^*\omega(a) V|_{\mathcal{D}} \text{ for all }a\in S.$$ 
As $\pi$ separates $S$,  $\pi$ and $\omega$ are unitarily equivalent. Let $U:K\rightarrow H$ be a  unitary  such that  $U(\mathcal{O})\subseteq \mathcal{D}$ and $$\omega(a)= U^*\pi(a) U|_{\mathcal{D}} \text{ for all }a\in S.$$
Then 
$$\pi(a)= V^*U^*\pi(a) UV|_{\mathcal{D}} \text{ for all }a\in S.$$
Since $\pi$ is a finite representation and $UV$ is an isometry on $H$, we have $UV$ is a unitary. Thus $V=U^*(UV)$ is  also a unitary. Therefore $\phi(a)=V^*\pi(a)V|_{D}$ on $\mathcal{A}$ is a representation of $\mathcal{A}$ which coincides with $\pi$ on $S$. Therefore $\phi(a)=\pi(a)$ for all $a\in \mathcal{A}$ and hence $\pi$ is  a local boundary representation for $S$.
\end{proof}

{\bf Acknowledgments.} 
The author would like to thank his supervisor Prof. A.K. Vijayarajan  for some useful discussions. Also, the author is thankful to the  Department of Atomic Energy, Government of India for the NBHM Ph.D. fellowship (File No. 0203/17 /2019/R\&D-II/10974).

\bibliographystyle{amsplain}

\end{document}